\newtheorem*{rep@theorem}{\rep@title}
\newcommand{\newreptheorem}[2]{%
\newenvironment{rep#1}[1]{%
 \def\rep@title{#2 \ref{##1}}%
 \begin{rep@theorem}}%
 {\end{rep@theorem}}}
\def\@seccntformat#1{\@ifundefined{#1@cntformat}%
   {\csname the#1\endcsname\quad}  % default
   {\csname #1@cntformat\endcsname}% enable individual control
}
\let\oldappendix\appendix %% save current definition of \appendix
\renewcommand\appendix{%
    \oldappendix
    \newcommand{\section@cntformat}{\appendixname~\thesection.\quad}}
\newtheorem{theorem}{Theorem}
\newtheorem{proposition}{Proposition}
\newtheorem{corollary}{Corollary}
\newtheorem{lemma}{Lemma}
\newtheorem*{remark}{Remark}
\theoremstyle{definition}
\newtheorem{example}{Example}[section]
\newtheorem{definition}{Definition}
\newcommand*\Heq{\ensuremath{\overset{\kern2pt H}{=}}}
\newcommand{\defeq}{\stackrel{\text{\tiny def}}{=}}
\begin{document}

\title{The Mean Drift: Tailoring the Mean Field Theory of Markov Processes for Real-World Applications}

\author{Mahmoud Talebi\thanks{The authors would like to thank Erik de Vink, Mieke Massink, Tjalling Tjalkens and Ulyana Tikhonova for their constructive comments and helpful remarks.}~~~~~Jan Friso Groote ~~~~~Jean-Paul M.G. Linnartz\\
\mails\\\\
\small Eindhoven University of Technology\\
\small Den Dolech 2, 5612 AZ, Eindhoven, The Netherlands\\}
\date{}

\maketitle

\begin{abstract}
\noindent  The statement of the mean field approximation theorem in the mean field theory of Markov processes particularly targets the behaviour of population processes with an unbounded number of agents. However, in most real-world engineering applications one faces the problem of analysing middle-sized systems in which the number of agents is bounded. In this paper we build on previous work in this area and introduce the mean drift. We present the concept of population processes and the conditions under which the approximation theorems apply, and then show how the mean drift is derived through a systematic application of the propagation of chaos. We then use the mean drift to construct a new set of ordinary differential equations which address the analysis of population processes with an arbitrary size.

\smallskip
\noindent \textbf{Keywords.} Markov chains, population processes, mean field approximation, propagation of chaos
\end{abstract}

\section{Introduction}
Population processes are stochastic models of systems which consist of a number of similar agents (or particles)\cite{kurtz1981approximation}. When the impact of each agent on the behaviour of the system is similar to other agents, it is said that the population process is a mean field interaction model \cite{benaim2008class}. It is possible to apply a symmetric reduction on the state space of these types of processes and gain some efficiency in their analysis. Mean field approximation refers to the continuous, deterministic approximations of the behaviour of such processes in their first moment, when the number of agents grows very large. These approximations were first proposed for several concrete cases in various areas of study e.g., from as early as the 18th century in population biology, where models such as the predator-prey equations and the SIR equations are used to describe the balance of species in an ecosystem and the dynamics of epidemics respectively~\cite{berryman1992orgins,dietz2002daniel}.

Since then, general theorems have been proven which show the convergence of the behaviour of population processes to solutions of differential equations. The proofs follow roughly the same steps which generally rely on Gr\"onwall's lemma and martingale inequalities \cite{darling2008differential}. One of the first generalized approximation theorems was given by Kurtz \cite{kurtz1970solutions}. The theory gives conditions which define a family of such models called density-dependent population processes, and finds their deterministic approximations by a theorem which is generally called the law of large numbers for standard Poisson processes~\cite{ethier2009markov}.

The mean field theory of Markov processes is increasingly being applied in the fields of computer science and communication engineering. In the field of communication engineering and starting with Bianchi's analysis of the IEEE 802.11 DCF protocol~\cite{bianchi1998ieee,bianchi2000performance}, much research has focused on discussing the validity of the so-called decoupling assumption in this analysis~\cite{bordenave2005random,
vvedenskaya2007multiuser,
sharma2009performance,cho2012asymptotic}. Several general frameworks have also been proposed which target the analysis of computer and communication systems~\cite{benaim2008class,le2007generic}. In the field of computer science the initial application of the approximations was intuitively motivated by methods such as fluid and diffusion approximations of queueing networks~\cite{hillston2005fluid}. These have resulted in the development of methods and tools to automate the analysis of mean field models, with extensive progress in the context of the stochastic process algebra PEPA \cite{hayden2010fluid,
hayden2011scalable,
bortolussi2013continuous} among others \cite{bortolussi2012fluid}. However, still a large family of models are deemed unsuitable for this fluid approximation analysis, since they often lead to demonstrably inaccurate approximations~\cite{hayden2010fluid,pourranjbar2012don}. This calls for revisiting the fundamental roots of the theory. Such an approach has been taken in~\cite{beccuti2014analysis} where the authors use a set of extended diffusion approximations to derive precise approximations of stochastic Petri Net models.

We identify the current challenge as the problem of analysing middle-sized systems: systems which are so large that they suffer from {\it state space explosion}, but not large enough such that they can be accurately analysed by common approximation methods. In this paper we focus on the evaluation of these middle-sized systems, and the most important contribution is the introduction of Poisson mean of intensities (equation~(\ref{eq:poaverage})), and the way they relate to the approximation theorem based on the idea of propagation of chaos.

Through equation (\ref{eq:poaverage}) we express the idea that occupancy measures can be seen as Poisson arrival rates. Based on this observation we propose employing the concept of the {\it Poisson mean} of the drift to build the set of ordinary differential equations in (\ref{eq:betterode}) when dealing with bounded systems. To provide further proof for the consistency of our observation with respect to already established results in the mean field theory, we show that the drift and the mean drift are equivalent in the limit (Theorem~\ref{theo:drifts}).

In the current text, we use only the most essential and trivial concepts to build the mean field approximations. Several decisions were made to achieve this goal: the choice of discrete-time Markov processes to specify the agent behaviour, a detailed discussion on time, the discussion on the derivation of transition maps, and the presentation of the convergence result in two clear steps (following~\cite{benaim2008class}). In our opinion, this attitude is essential in promoting a correct understanding of the theory.

The rest of this text is organized as follows. In Section~\ref{sec:pre} we give a brief introduction to necessary concepts in real analysis and probability theory which are used in later sections. Section~\ref{sec:method} describes the family of mean field interaction models, and the derivation of their deterministic approximations. We then state Theorem~\ref{theo1} and Corollary~\ref{cor1}, which provide the basis for stating the main results of this work. Finally, in Section~\ref{sec:propagation} we present the idea of mean drift as our main result. The detailed proofs follow in Section~\ref{sec:proofs}.

\section{Preliminaries}\label{sec:pre}
In this section we give essential definitions and theorems which will be needed in later sections. In the definitions related to probability theory we rely mostly on \cite{kallenberg2006foundations}. The definitions of independence, conditional expectations and the law of large numbers are from \cite{billingsley2008probability}. Several properties of Poisson point processes and martingales are according to \cite{kurtz2007lectures}. For the definitions and theorems related to real analysis (including the Lebesgue integral) we refer mainly to \cite{royden2010real}, except that the statement of the Gr\"onwall's inequality is based on \cite{kurtz2007lectures}.

In what follows, we will use the following notations in the text. Let $T$ be a totally ordered set. We use the notation $\{X_t:t\in T\}$ to show a sequence of objects indexed by set $T$. The notation $\mathbb{R}_{\geq 0}=[0,\infty)$ is used to refer to the set of non-negative real numbers. In the same manner, $\mathbb{Q}_{\geq 0}$ is the set of non-negative rational numbers.

To make a clear distinction between superscripts and exponentiation, we write $X^N$ to denote ``$X$ raised to the power $N$'' and $X^{(N)}$ to denote ``$X$ annotated by $N$''.

\subsection{Measures and probability}\label{subsec:measure}
Let $X$ be a set, and $d:X\times X\rightarrow\mathbb{R}$ be a function which for all $x,y\in X$ satisfies the following properties:
\begin{itemize}
\item $d(x,y)\geq 0$, where $d(x,y)=0$ if and only if $x=y$,
\item $d(x,y)=d(y,x)$ (symmetry),
\item For all $z\in X$, $d(x,y)\leq d(x,z)+d(z,y)$ (triangle inequality),
\end{itemize}
Then $d$ is called a {\it distance function} or a {\it metric}, and the pair $(X,d)$ is a {\it metric space}.

A $\sigma$-algebra $\Sigma_{X}$ on a set $X$ is a set of subsets of $X$ which contains the empty set $\emptyset$ and is closed under the complement, countable union and countable intersection of subsets. The pair $(X,\Sigma_{X})$ is called a {\it measurable space}.

Let $X$ and $Y$ be sets. Consider the function $f:X\rightarrow Y$, $f^{-1}:2^{Y}\rightarrow 2^{X}$ is a set mapping for which for $B\in 2^{Y}$:
\[f^{-1}(B)\defeq\{x\in X\mid f(x)\in B\}.\]
$f^{-1}(B)$ is often called the source of $B$.
\begin{definition}[Measurable Function]\label{def:measurable}
Let $(X,\Sigma_X)$ and $(Y,\Sigma_Y)$ be measurable spaces, and let $f:X\rightarrow Y$ be a function. For all $B\in\Sigma_{Y}$, and the set mapping $f^{-1}$, if 
\[f^{-1}(B)\in\Sigma_{X},\]
$f$ is called a $\Sigma_X/\Sigma_Y$-measurable function.
\end{definition}
When the exact shape of the $\sigma$-algebras are not of concern or already clear in a context, one may also refer to these functions as {\it $\Sigma_X$-measurable} or just {\it measurable}.

Let $(X,\Sigma_X)$ and $(Y,\Sigma_Y)$ be measurable spaces. Let $f:X\rightarrow Y$ be a function. Then the {\it induced $\sigma$-algebra of $f$}, denoted by $\sigma(f)$ is the smallest $\sigma$-algebra $\mathcal{A}\subseteq \Sigma_X$ which makes $f$ measurable.

Let $(X,\Sigma_X)$ be a measurable space. A {\it measure} on $(X,\Sigma_X)$ is defined as a function $\mu:\Sigma_X\rightarrow \mathbb{R}_{\geq 0}$ which satisfies the following two properties:
\begin{itemize}
\item $\mu(\emptyset)=0$,
\item For all countable collections of pairwise disjoint sets $\left\{E_{i}\right\}_{i=1}^{\infty}$ in $\Sigma_{X}$:
\[\mu\left(\bigcup_{k=1}^{\infty}E_k\right)=\sum_{k=1}^{\infty}\mu(E_k).\]
\end{itemize}

Based on the definition of measures, we define {\it Lebesgue integrals}, which are indispensable tools in modern probability theory. Let $(X,\Sigma_X)$ be a measurable space and let $\mu:\Sigma_X\rightarrow\mathbb{R}_{\geq 0}$ be a measure. Let $A\in X$, for $x\in X$ the {\it indicator function} $\mathds{1}_A:X\rightarrow \mathbb{R}_{\geq 0}$ is defined as:
\[\mathds{1}_A(x)\defeq\left\{
  \begin{array}{l l}
    1 & \quad \text{if $x\in A$}\\
    0 & \quad \text{if $x\notin A$}.
  \end{array} \right.\]
For the boolean domain, we define $\mathds{1}:\mathbb{B}\rightarrow\mathbb{R}_{\geq 0}$, which for $b\in\mathbb{B}$:
\[\mathds{1}(b)\defeq\left\{
  \begin{array}{l l}
    1 & \quad \text{if $b$ true}\\
    0 & \quad \text{if $b$ false}.
  \end{array} \right.\]

Given a measure $\mu$ on the measurable space $(X,\Sigma_X)$, the Lebesgue integral of $\mathds{1}_A$ is denoted by $\int_{X}\mathds{1}_A\,d\mu$ which is:
\[\int_{X}\mathds{1}_A\,d\mu\defeq\mu(A).\]

More generally, for natural number $n$ let $A_1,\ldots,A_m\in \Sigma_X$ be sets, and let $c_1,\ldots,c_m\in\mathbb{R}_{\geq 0}^n$ be real vectors. A function $\textbf{S}:X\rightarrow\mathbb{R}_{\geq 0}^n$ where $\textbf{S}=\sum_k c_k\mathds{1}_{A_k}$ is called a {\it simple function}. The Lebesgue integral of $\textbf{S}$ is:
\[\int_{X}\textbf{S}\,d\mu\defeq\sum_k c_k\mu(A_k).\]

For an arbitrary function $f:X\rightarrow\mathbb{R}_{\geq 0}^n$ define:
\[\underline{\int_X} f\,d\mu\defeq\sup\left\{\int_X \textbf{S}\,d\mu: 0\leq \textbf{S}\leq f\text{~pointwise,~}\textbf{S}\text{~simple}\right\}\]
and
\[\overline{\int_X} f\,d\mu\defeq\inf\left\{\int_X \textbf{S}\,d\mu: \textbf{S}\geq f \text{~pointwise,~}\textbf{S}\text{~simple}\right\}.\]
The Lebesgue integral $\int_X f\,d\mu$ exists ($f$ is Lebesgue integrable) if $\overline{\int_X} f\,d\mu=\underline{\int_X} f\,d\mu<\infty$, in which case:
\[\int_X f\,d\mu\defeq\overline{\int_X} f\,d\mu.\]
We now define the Lebesgue integral of signed functions. Let $f:X\rightarrow\mathbb{R}^n$. There exist functions $f_1,\ldots,f_n$ such that for $x\in X$:
\[f(x)=(f_1(x),\ldots,f_n(x)).\]
For $1\leq i\leq n$ define the sequence of functions $f^{+}_i$ and $f^{-}_i$ which satisfy:
\begin{equation*}
f^{+}_i(x)=\begin{cases}
f_i(x)~~\text{if}~f_i(x)>0,\\
0~~~~~~~~\text{otherwise}
\end{cases}
\end{equation*}
and:
\begin{equation*}
f^{-}_i(x)=\begin{cases}
-f_i(x)~~~\text{if}~f_i(x)< 0,\\
0~~~~~~~~~~~~\text{otherwise}.
\end{cases}
\end{equation*}
Accordingly, define the functions $f^{+}:X\rightarrow\mathbb{R}_{\geq 0}^{n}$ and $f^{-}:X\rightarrow\mathbb{R}_{\geq 0}^{n}$ as:
\[f^{+}(x)\defeq(f^{+}_1(x),\ldots,f^{+}_n(x))~~\text{and}~~f^{-}(x)\defeq(f^{-}_1(x),\ldots,f^{-}_n(x)).\]
The Lebesgue integral of a signed function $f$ is:
\[\int_{X}f\,d\mu\defeq\int_{X}f^{+}\,d\mu-\int_{X}f^{-}\,d\mu,\]
which exists only if each of the integrals on the right hand side of this equation exist.

We have now introduced enough basic concepts to start talking about probabilities. For the measurable space $(X,\Sigma_{X})$, a measure $P:\Sigma_{X}\rightarrow \mathbb{R}_{\geq 0}$ is called a {\it probability measure} if it satisfies the property: $P(X)=1$.

\begin{definition}[Probability Space]
A triple $(\Omega, \mathcal{F}, P)$ is called a {\it probability space} in which:
\begin{itemize}
\item $\Omega$ is a sample space, often containing all the possible outcomes of some experiment.
\item $\mathcal{F}\subseteq 2^{\Omega}$ is a set of events, which forms a $\sigma$-algebra over the set $\Omega$.
\item $P$ is a probability measure defined over the measurable space $(\Omega,\mathcal{F})$.
\end{itemize}
\end{definition}

In a probability space $(\Omega,\mathcal{F},P)$, each element $E\in\mathcal{F}$ is called an {\it event}. An event $E$ occurs {\it almost surely} if $P(E)=1$.

\subsection{Random variables and processes}
\begin{definition}[Random Elements and Random Variables]
Let $(\Omega, \mathcal{F}, P)$ be a probability space, and let $(S,\Sigma_{S})$ be a measurable space. A measurable function $X:\Omega\rightarrow S$, is called a random element in $S$. When $S=\mathbb{R}$, $X$ is called a random variable.
\end{definition}

If $S=\mathbb{R}^n$ for some $n>0$, $X$ is called a {\it random vector}. The concept of a random vector is in a way a generalization of the concept of a random variable. In the discussion that follows we often use the term random variable to refer to both.

A random element $X$ from $(\Omega,\mathcal{F})$ to $(S,\Sigma_S)$ defines (induces) a new measure $P\circ X^{-1}$ on $(S,\Sigma_S)$, which for $B\in\Sigma_S$ satisfies:
\[(P\circ X^{-1})B=P\{\omega\in \Omega:X(\omega)\in B\}.\]
The function $(P\circ X^{-1})$ is again a probability measure on space $(S,\Sigma_S)$, called the {\it probability distribution} (or {\it law}) of $X$. 

Here it is appropriate to briefly introduce the concept of constant random elements. Let $(S,\Sigma_S)$ be a measurable space and for some $x\in S$ let $\delta_{x}$ be a probability measure on $S$ which for $A\in \Sigma_S$ is defined as:
\[\delta_{x}(A)=\begin{cases}
1,~\text{if}~x\in A\\
0,~\text{if}~x\notin A.\\
\end{cases}\]
Then the measure $\delta_{x}$ is called the Dirac measure centred on $x$. Let $X:\Omega\rightarrow S$ be a random element. If for some $x\in S$, $\delta_x$ is the probability distribution of $X$ then $X$ is called a {\it constant (or determininistic) random element}.

Let $X:\Omega\rightarrow S$ and $Y:\Omega\rightarrow S$ be random elements. Then $X$ and $Y$ are {\it equal in distribution} (denoted by $X\stackrel{d}{=}Y$) if and only if for all $B\in\Sigma_S$:
\[(P\circ X^{-1})B=(P\circ Y^{-1})B.\]

The {\it expected value} or {\it expectation} $\mathbb{E}[X]$ of a random variable $X:\Omega\rightarrow\mathbb{R}^n$ is defined as:
\[\mathbb{E}[X]=\int_{\Omega}X\,dP=\int_{\mathbb{R}^n}x\,d(P\circ X^{-1}),\]
if the Lebesgue integral exists.
\begin{remark}
The latter Lebesgue integral sums over the space $\mathbb{R}^n$ according to the measure defined by the probability distribution of $X$. In literature, the integral is almost always written as:
\[\int_{\mathbb{R}^n}x\,(P\circ X^{-1})(dx).\]
In the rest of the text, we will follow this convention.
\end{remark}

For a function $f:\mathbb{R}^n\rightarrow\mathbb{R}^m$ the expectation $\mathbb{E}[f(X)]$ is defined as:
\[\mathbb{E}[f(X)]=\int_{\mathbb{R}^m}x(P\circ(f\circ X)^{-1})(dx).\]
\begin{remark}
Let $X:\Omega\rightarrow \mathbb{R}^n$ be a random variable, and let $A\subseteq\mathbb{R}^n$. We define the special operator $\mathbb{P}$ as follows:
\[\mathbb{P}\{X\in A\}=\mathbb{E}[\mathds{1}_{A}(X)].\]
\end{remark}

Let $X:\Omega\rightarrow\mathbb{R}^n$ be a random variable. The {\it conditional expectation} of $X$ with respect to a $\sigma$-algebra $\mathcal{D}\subseteq\mathcal{F}$, written as $\mathbb{E}[X\mid \mathcal{D}]$, is a $\mathcal{D}$-measurable function which satisfies the following condition:
\[\int_D \mathbb{E}[X\mid \mathcal{D}]\,dP=\int_D X \,dP,~~~\textit{for all }D\in\mathcal{D}.\]
The random variable $\mathbb{E}[X\mid \mathcal{D}]$ which satisfies these equations is unique \cite{billingsley2008probability}.

Consider a random element  $Y:\Omega\rightarrow S$. Given the induced $\sigma$-algebra of $Y$, $\sigma(Y)\subseteq\mathcal{F}$, the conditional expectation of $X$ given $Y$ is a $\sigma(Y)$-measurable function $\mathbb{E}[X\mid Y]$ satisfying:
\[\mathbb{E}[X\mid Y]=\mathbb{E}[X\mid\sigma(Y)].\]

Let $(\Omega,\mathcal{F},\mathbb{P})$ be a probability space. The distinct events $E_1,\ldots,E_n\in\mathcal{F}$ are {\it mutually independent} if and only if:
\[P\left(\bigcap_{i\leq n}E_i\right)=\prod_{i\leq n}P(E_i).\]
For a sequence of $\sigma$-algebras $\mathcal{C}_i\subseteq\mathcal{F}$, where $i\in\{1,\ldots,n\}$, the $\sigma$-algebras are mutually independent if all sequences $A_1,\ldots,A_n$ where $A_i\in \mathcal{C}_i$ are mutually independent.

For $i\in\{1,\ldots,n\}$ let $X_i:\Omega\rightarrow S$ be random elements. The random elements $X_i$ are mutually independent if the sequence of $\sigma$-algebras $\sigma(X_i)$ are mutually independent.

We now possess all the tools to present the following important result regarding the summations of independent and identically distributed (i.i.d.) random variables.
\begin{theorem}[The Weak Law of Large Numbers]\label{theo:wlln}
For $1\leq i\leq n$ let $X_i$ be mutually independent, and identically distributed random variables (i.i.d.'s), with $\mathbb{E}[X_i]=\eta$. Let $S_n=X_1+\ldots+X_n$. Then for any real constant $\varepsilon>0$:
\[\lim_{n\rightarrow\infty}\mathbb{P}\left\{\left\lvert \frac{S_n}{n}-\eta\right\rvert\geq \varepsilon\right\}=0.\]
\end{theorem}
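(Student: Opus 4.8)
The plan is to reduce the claim to an elementary moment bound. First I would record \emph{Markov's inequality}, which is immediate from the construction of the Lebesgue integral in Section~\ref{subsec:measure}: for a non-negative random variable $Z$ and $a>0$ one has $a\,\mathds{1}_{\{Z\ge a\}}\le Z$ pointwise, and integrating against $P$ and using monotonicity of the integral gives $a\,\mathbb{P}\{Z\ge a\}\le\mathbb{E}[Z]$. Applying this to $Z=\big(\tfrac{S_n}{n}-\eta\big)^2$ and $a=\varepsilon^2$ produces \emph{Chebyshev's inequality}
\[\mathbb{P}\Big\{\Big|\tfrac{S_n}{n}-\eta\Big|\ge\varepsilon\Big\}\;\le\;\frac{1}{\varepsilon^2}\,\mathbb{E}\Big[\Big(\tfrac{S_n}{n}-\eta\Big)^2\Big],\]
and since $\mathbb{E}[S_n/n]=\eta$ by linearity of expectation, the right-hand side equals $\varepsilon^{-2}$ times the variance of $S_n/n$. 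Everything then comes down to showing that this variance vanishes as $n\to\infty$.

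If the $X_i$ are additionally square-integrable, with $\sigma^2=\mathrm{Var}(X_1)<\infty$, the proof is finished on the spot: mutual independence kills the cross terms $\mathbb{E}[(X_i-\eta)(X_j-\eta)]$ for $i\ne j$, so $\mathbb{E}\big[(S_n-n\eta)^2\big]=n\sigma^2$ and the bound above is $\sigma^2/(n\varepsilon^2)\to0$. To get the theorem under the stated hypothesis --- only $\mathbb{E}|X_1|<\infty$ --- I would insert a \emph{truncation} step. Choose a cutoff growing with $n$: set $Y_i=X_i\mathds{1}_{\{|X_i|\le n\}}$ and $R_i=X_i-Y_i$, and write $\tfrac{S_n}{n}-\eta$ as the sum of (i) the centred truncated mean $\tfrac1n\sum_i(Y_i-\mathbb{E}Y_1)$, (ii) the deterministic bias $\mathbb{E}Y_1-\eta$, and (iii) the remainder $\tfrac1n\sum_iR_i$. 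Term (ii) tends to $0$ because $\mathbb{E}Y_1=\mathbb{E}[X_1\mathds{1}_{\{|X_1|\le n\}}]\to\eta$ (dominated convergence). Term (iii) is negligible in probability: $\mathbb{P}\{\tfrac1n\sum_iR_i\ne0\}\le\sum_i\mathbb{P}\{|X_i|>n\}=n\,\mathbb{P}\{|X_1|>n\}\le\mathbb{E}\big[|X_1|\mathds{1}_{\{|X_1|>n\}}\big]\to0$. Term (i) is handled by Chebyshev again, since the $Y_i$ are i.i.d.\ and bounded, so its variance is at most $\tfrac1n\mathbb{E}[Y_1^2]=\tfrac1n\mathbb{E}\big[X_1^2\mathds{1}_{\{|X_1|\le n\}}\big]$; a short computation using $X_1^2\mathds{1}_{\{|X_1|\le n\}}\le\int_0^n 2t\,\mathds{1}_{\{|X_1|>t\}}\,dt$ and the fact that $t\,\mathbb{P}\{|X_1|>t\}\to0$ (a consequence of $\mathbb{E}|X_1|<\infty$) shows this is $o(1)$. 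Assembling the three estimates with an $\varepsilon/3$-style split over a single suitably large $n$ completes the argument.

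The main obstacle is precisely the moment estimate controlling term (i): the crude bound $\mathbb{E}[Y_1^2]\le n\,\mathbb{E}|X_1|$ only yields a variance of order $O(1)$, not $o(1)$, so one genuinely needs the sharper statement $\mathbb{E}\big[X_1^2\mathds{1}_{\{|X_1|\le n\}}\big]=o(n)$, together with the bookkeeping that forces a single choice of $n$ to simultaneously make the bias small, the tail probability small, and this truncated variance small. An alternative that sidesteps truncation is the characteristic-function route --- $\varphi_{S_n/n}(t)=\varphi_{X_1}(t/n)^n\to e^{it\eta}$ using only differentiability of $\varphi_{X_1}$ at the origin, then the L\'evy continuity theorem plus the fact that convergence in distribution to a constant implies convergence in probability --- but this imports machinery beyond what the preliminaries develop, so I would prefer the truncation proof here.
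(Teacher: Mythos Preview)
The paper does not supply its own proof of Theorem~\ref{theo:wlln}: it is listed in the preliminaries as a standard result imported from Billingsley (see the opening paragraph of Section~\ref{sec:pre}, where the law of large numbers is explicitly attributed to \cite{billingsley2008probability}), and Section~\ref{sec:proofs} contains no argument for it. So there is nothing in the paper to compare your proposal against.

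On its own merits your argument is correct and is the classical truncation proof one finds in Billingsley or Durrett. The square-integrable case via Chebyshev is immediate, and your handling of the general $L^1$ case is sound: the bias term vanishes by dominated convergence, the tail term is controlled by $n\,\mathbb{P}\{|X_1|>n\}\le\mathbb{E}\big[|X_1|\mathds{1}_{\{|X_1|>n\}}\big]\to0$, and the key estimate $\mathbb{E}\big[X_1^2\mathds{1}_{\{|X_1|\le n\}}\big]=o(n)$ follows exactly as you indicate from the layer-cake bound and the fact that $t\,\mathbb{P}\{|X_1|>t\}\to0$. Two small remarks: it would be cleaner to write $Y_i^{(n)}$ to make the dependence of the truncation on the sample size explicit, and in the final assembly the deterministic bias $\mathbb{E}Y_1^{(n)}-\eta$ should be absorbed into the event $\{|\cdot|\ge\varepsilon/3\}$ rather than into a probability bound, but these are cosmetic. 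Your instinct to avoid the characteristic-function route is appropriate given the paper's stated preliminaries.
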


Next, we define stochastic processes.
\begin{definition}[Stochastic Processes]
Let $(\Omega,\mathcal{F},P)$ be a probability space. Let $(S,\Sigma_{S})$ be a measurable space and let $T$ be a totally ordered index set. Let $S^{T}=\{f:T\rightarrow S\}$ be the class of functions from $T$ to $S$, and let $U\subseteq S^{T}$, where $(U,\Sigma_U)$ is a measurable space. A random element $X:\Omega\rightarrow U$ is called a {\it stochastic} or {\it random process} in $S$.
\end{definition}
The elements of $U$ are often called paths or sample paths. However, stochastic processes are more commonly defined as follows.

For $t\in T$ consider the set of all evaluation mappings (functionals) $\pi_t:S^{T}\rightarrow S$ where $\pi_t(f)=f(t)$, and define $X(t)=\pi_t\circ X$. Clearly, for each $t$, $X(t):\Omega\rightarrow S$. Based on~\cite{kallenberg2006foundations} (Lemma 1.4 and Lemma 2.1), since $X$ is measurable and $\pi_t$ are functions, for each $t$, $X(t)$ is also measurable and is a random element in $S$.

Therefore we may also specify an {\it $S$-valued stochastic process} $X$ by a sequence of random elements $\{X(t):t\in T\}$. We write $X(t,\omega)$ when we talk about the value of $X(t)$ for a specific outcome $\omega\in\Omega$. The index set $T$ usually denotes time, and is either discrete ($T=\mathbb{N}$) or continuous ($T=\mathbb{R}_{\geq 0}$).

In practice it is important to switch between the two notions of a stochastic process and employ both intuitions. For Markov processes, they are often specified using the notion of a sequence of random variable, while, when discussing their behaviour they are viewed as function-valued random elements.

\subsection{Markov processes}
Let $T$ be a totally ordered set, called the time domain. Consider the probability space $(\Omega,\mathcal{F},P)$. For a stochastic process we formalize the idea of information known at time $t\in T$ as follows. 

\begin{definition}[Filtration]
Let $\{\mathcal{F}_t\}$ be a sequence of $\sigma$-algebras, where $\mathcal{F}_t\subseteq\mathcal{F}$. If the sequence is increasing i.e., for all $s,t\in T$, $s\leq t$ implies $\mathcal{F}_s\subseteq\mathcal{F}_t$, then $\{\mathcal{F}_t\}$ is called a filtration.
\end{definition}

Let $S$ be a set and let $\{\mathcal{F}_t\}$ be a filtration. An $S$-valued stochastic process $\{X(t):t\in T\}$ is $\{\mathcal{F}_t\}$-adapted if and only if for each $t\in T$, $\mathcal{F}_t$ is the smallest $\sigma$-algebra for which $X(t)$ is $\mathcal{F}_t/\Sigma_S$-measurable i.e., $\mathcal{F}_t=\sigma(X(t))$. For $s\leq t$ the relation $\mathcal{F}_s\subseteq\mathcal{F}_t$ implies that for all $s\leq t$, $X(s)$ is also $\mathcal{F}_t/\Sigma_S$-measurable.

\begin{definition}[Markov Processes]
Let $S$ be a set. An $S$-valued $\{\mathcal{F}_t\}$-adapted stochastic process $\{X(t):t\in T\}$ is a Markov process if for $s,t\geq 0$ and any function $f:S\rightarrow \mathbb{R}^n$:
\[\mathbb{E}[f(X(t+s))\mid\mathcal{F}_t]=\mathbb{E}[f(X(t+s))\mid X(t)].\]
\end{definition}

The above property is called the {\it Markov property}. If $X(t)$ also has the property that for all $x\in S$:
\begin{equation}\label{eq:markov}
\mathbb{E}[f(X(t+s))\mid X(t)=x]=\mathbb{E}[f(X(s))\mid X(0)=x]
\end{equation}
then $X(t)$ is called a {\it time-homogeneous} Markov process.

In what follows we mention some ideas from the theory of Feller semigroups~\cite{kallenberg2006foundations}, which will be used in our proofs. Let $v\in S$ be an initial state, $f:S\rightarrow\mathbb{R}^n$ be a function, and $\{\mathcal{T}_t:t\in \mathbb{R}_{\geq 0}\}$ be a sequence of unary linear operators. A time-homogeneous Markov process $X(t)$ corresponds to the sequence $\{\mathcal{T}_t\}_{t\geq 0}$ if for all $t\geq 0$:
\[\mathcal{T}_t f(v)=\mathbb{E}[f(X(t))\mid X(0)=v].\]

Based on the Markov and time-homogeneity properties, for $s,t\geq 0$ the operators $\{\mathcal{T}_t\}_{t\geq 0}$ satisfy:
\[\mathcal{T}_{s+t}f(v)=\mathcal{T}_t \mathcal{T}_s f(v).\]

As such, $\{\mathcal{T}_t\}_{t\geq 0}$ is called an operator semi-group. It follows that the initial state $v$ and the linear operators $\{\mathcal{T}_t\}_{t\geq 0}$ partially characterize the evolution of the stochastic process $\{X(t):t\in T\}$ throughout time.

Let $S$ be a set and let $T=\mathbb{N}$. Let $\left\{X(t):t\in T\right\}$ be a time-homogeneous Markov process in $S$, $X(t)$ is also called a {\it discrete-time Markov chain} (DTMC). As a special case of $\{\mathcal{T}_t\}$, consider the map $P_t:S\times S\rightarrow[0,1]$, which for $i,j\in S$ is defined as:
\[P_t(i,j)=\mathbb{P}\{X(t)=j\mid X(0)=i\}.\]

In which we have taken $f(X(t))=\mathds{1}_{\{j\}}(X(t))$. The time-homogeneity implies that for $t\geq 0$ and every $i,j\in S$, the map $P_t$ satisfies:
\[P_t=(P_1)^{t},\]
which is called the {\it Chapman-Kolmogorov equation}, and the map $P=P_1$ is called the {\it transition map} or the {\it transition matrix} of $X(t)$. 

Let time $T=\mathbb{R}_{\geq 0}$ be continuous. Let $\{X(t):t\in T\}$ be a time-homogeneous Markov process. For the corresponding linear operator $\{\mathcal{T}_t\}_{t\geq 0}$ the {\it infinitesimal generator} $A$ is a mapping which maps a function $f:S\rightarrow\mathbb{R}$ in its domain to $g:S\rightarrow\mathbb{R}$ (hence $g$ is unique) and satisfies:
\begin{equation}\label{eq:generator}
Af=g=\lim_{t\rightarrow 0^{+}}\frac{\mathcal{T}_t f-f}{t},
\end{equation}

For such a pair $(f,g)\in A$ the following equation always holds~\cite{kurtz1981approximation}:
\begin{equation}\label{eq:generator2}
\mathcal{T}_t f-f=\int_{0}^{t}\mathcal{T}_s g\, ds.
\end{equation}

For a full discussion on the above equation, also see Dynkin's formula~\cite{kallenberg2006foundations}.

\subsection{Martingales and stopping times}
\begin{definition}[Martingales]
Let $\{X(t):t\in T\}$ be an $\{\mathcal{F}_t\}$-adapted stochastic process. Then $X(t)$ is a {\it martingale} if for $s\leq t$:
\[\mathbb{E}[X(t)\mid\mathcal{F}_s]=X(s),\]
almost surely.
\end{definition}

For $T$ discrete, $X(t)$ is a discrete-time martingale if for any $t\in T$ it satisfies:
\[\mathbb{E}[\lvert X(t)\rvert]<\infty~~~,~\mathbb{E}[X(t+1)|X(0),\ldots,X(t)]=X(t)\]
Or equivalently:
\begin{equation}\label{eq:discmart}
\mathbb{E}[X(t+1)-X(t)|X(0),\ldots,X(t)]=0
\end{equation}

In a similar manner, a {\it submartingale} is a process $X(t)$ which for $s>0$ satisfies:
\[\mathbb{E}[X(t+s)\mid\mathcal{F}_t]\geq X(t).\]
with the implication that every martingale is also a submartingale.

Martingales are important due to the fact that despite their generality they satisfy a number of interesting properties. In this text we use one such result called the norm inequality from Doob \cite{kallenberg2006foundations}, which we state in the following form.

\begin{lemma}[Doob's Inequality]
Let $\{X(t):t\in T\}$ be a submartingale taking non-negative values, either in discrete or continuous time. Assume that the process is right continuous with left limits everywhere. Then, for any constant $c>0$,
\[\mathbb{P}\left\{\sup_{0\leq t\leq T}X(t)\geq c\right\}\leq\frac{\mathbb{E}[X(T)]}{c}.\]
\end{lemma}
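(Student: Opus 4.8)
The plan is to prove the discrete-time case directly by the classical first-passage decomposition, and then to lift the result to continuous time by discretising the interval along a countable dense set and passing to the limit using the assumed right continuity of the sample paths. Throughout, $T$ is treated as the (fixed) terminal time, with the index set being $\{0,1,\dots,T\}$ or $[0,T]$ accordingly.

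For the discrete-time case, the first step is to partition the event $A=\{\sup_{0\le t\le T}X(t)\ge c\}$ according to the first time the process reaches level $c$: for $0\le k\le T$ I would set $A_k=\{X(0)<c,\ \ldots,\ X(k-1)<c,\ X(k)\ge c\}$, so that the $A_k$ are pairwise disjoint, $A=\bigcup_{k=0}^{T}A_k$, and each $A_k$ lies in $\mathcal{F}_k$ since it is determined by $X(0),\dots,X(k)$. On $A_k$ one has $X(k)\ge c$, hence $c\,\mathbb{P}(A_k)\le\mathbb{E}[X(k)\mathds{1}_{A_k}]$. The second step uses the submartingale property in the form $\mathbb{E}[X(T)\mid\mathcal{F}_k]\ge X(k)$ together with the $\mathcal{F}_k$-measurability of $\mathds{1}_{A_k}$ and the defining property of conditional expectation to obtain $\mathbb{E}[X(k)\mathds{1}_{A_k}]\le\mathbb{E}[X(T)\mathds{1}_{A_k}]$. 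Summing over $k$, using disjointness of the $A_k$, and finally non-negativity of $X(T)$, this chains to
\[c\,\mathbb{P}(A)\;=\;\sum_{k=0}^{T}c\,\mathbb{P}(A_k)\;\le\;\sum_{k=0}^{T}\mathbb{E}[X(T)\mathds{1}_{A_k}]\;=\;\mathbb{E}[X(T)\mathds{1}_{A}]\;\le\;\mathbb{E}[X(T)],\]
and dividing by $c$ finishes this case.

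For the continuous-time case, I would fix a countable dense subset $D\subseteq[0,T]$ with $T\in D$ and write $D$ as an increasing union of finite sets $D_m$, each containing $T$. For each $m$ the finite family $\{X(t):t\in D_m\}$ is a non-negative discrete-time submartingale (with respect to the restriction of $\{\mathcal{F}_t\}$), so the case already proved gives $\mathbb{P}\{\max_{t\in D_m}X(t)\ge\lambda\}\le\mathbb{E}[X(T)]/\lambda$ for every $\lambda>0$. Since the events $\{\max_{t\in D_m}X(t)>\lambda\}$ increase with $m$ to $\{\sup_{t\in D}X(t)>\lambda\}$, continuity of the probability measure from below upgrades this to $\mathbb{P}\{\sup_{t\in D}X(t)>\lambda\}\le\mathbb{E}[X(T)]/\lambda$. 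Right continuity of the paths (with $T$ included in $D$) gives $\sup_{t\in D}X(t)=\sup_{0\le t\le T}X(t)$ pointwise, so the same bound holds for the sup over the whole interval; letting $\lambda\uparrow c$ through $\{\sup_{0\le t\le T}X(t)\ge c\}=\bigcap_{n}\{\sup_{0\le t\le T}X(t)>c-1/n\}$ then yields the stated inequality.

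I expect the genuinely delicate step to be the passage from finite subsets to the whole interval: one must argue that right continuity of the paths makes $\sup_{t\in D}X(t)$ equal to $\sup_{0\le t\le T}X(t)$, and one must keep strict and non-strict inequalities straight, applying the finite-level bound at levels strictly below $c$ and only then pushing to $c$ by monotone convergence. Everything else — the combinatorial decomposition, the use of the tower property, and the measure-theoretic bookkeeping — is routine.
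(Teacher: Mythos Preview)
Your argument is correct and is the standard proof of Doob's maximal inequality: the first-passage decomposition in discrete time followed by approximation through finite subsets of a countable dense set in continuous time, with right continuity used to identify the supremum over the dense set with the full supremum. The handling of strict versus non-strict inequalities in the limiting step is done carefully.

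Note, however, that the paper does not supply its own proof of this lemma. It is quoted as a known result (attributed to Doob and cited from Kallenberg) and used without argument; the paper only records the subsequent $L^\alpha$ inequality~(\ref{eq:doob}) with a reference to Ethier--Kurtz. So there is no ``paper's proof'' to compare against: you have provided a complete proof where the paper simply cites one.
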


Based on doob's inequality, for integer $\alpha \geq 1$ the following inequality is derived in~\cite{ethier2009markov} (Proposition 2.2.16):
\begin{equation}\label{eq:doob}
\mathbb{E}\left[\sup_{0\leq t\leq T}X(t)^{\alpha}\right]\leq\left(\frac{\alpha}{\alpha-1}\right)^{\alpha}\mathbb{E}[X(T)^\alpha].
\end{equation}

\begin{definition}[Stopping Times]
Let $\{X(t):t\in T\}$ be a $\{\mathcal{F}_t\}$-adapted process. A random variable $\tau:\Omega\rightarrow T$ is an $\{\mathcal{F}_t\}$-stopping time if for all $t\in T$, the event $\{\tau\leq t\}$ is an element of $\mathcal{F}_t$. 
\end{definition}

In simple terms, the condition means that for a stopping time $\tau$ it must be always possible to determine whether it has occurred by a time $t\in T$ or not, only by referring to the events in $\mathcal{F}_t$. $X(\tau)$ is the state of process $X(t)$ at the random time $\tau$.

Let $X(t)$ be an $S$-valued stochastic process and let $A\subseteq S$. An example of a stopping times is:
\begin{itemize}
\item $\tau_{A}=\min\{t\in T:X(t)\in A\}$ is called the {\it hitting time} of $A$: the first time in which the event $A$ occurs.
\end{itemize}
While the following is not a stopping time:
\begin{itemize}
\item $\tau_A=\max\{t\in T: X(t)\in A\}$: the last time in which the event $A$ occurs.
\end{itemize}

\subsection{Poisson point processes}\label{subsec:poisson}
Before introducing Poisson processes, we draw a link between Bernoulli trials and the so-called {\it Poisson distributions}, using the following approximation theorem.
\begin{theorem}[Poisson Limit Theorem]\label{theo:poisson}
Let $n\in\mathbb{N}$, and for $1\leq i\leq n$ let $Z_i$ be a sequence of i.i.d. random variables, where each $Z_i$ takes value $1$ with probability $p$ and $0$ with probability $1-p$. 

If:
\[n\rightarrow\infty,~p\rightarrow 0, \text{~while~}np\rightarrow\lambda\text{~where~}0\leq\lambda\ll\infty,\]
then
\[\mathbb{P}\left\{ \sum_{i=1}^{n}Z_i=k\right\}\rightarrow e^{-\lambda}\frac{\lambda^k}{k!}.\]
\end{theorem}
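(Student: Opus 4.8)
The plan is to reduce the statement to the classical binomial-to-Poisson computation. First I would observe that, since the $Z_i$ are mutually independent and identically distributed Bernoulli random variables, the sum $S_n=\sum_{i=1}^{n}Z_i$ has a binomial law: summing over the $\binom{n}{k}$ choices of which $k$ of the $n$ trials succeed and using the product rule from the definition of mutual independence,
\[
\mathbb{P}\left\{\sum_{i=1}^{n}Z_i=k\right\}=\binom{n}{k}\,p^{k}(1-p)^{n-k}.
\]
I would then read the hypothesis ``$n\rightarrow\infty$, $p\rightarrow 0$, $np\rightarrow\lambda$'' as referring to a sequence of pairs $(n,p_n)$ with $p_n\rightarrow 0$ and $\lambda_n:=np_n\rightarrow\lambda$, and substitute $p=\lambda_n/n$ into the expression above, keeping $k$ fixed.

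Next I would factor the result into pieces whose limits can be read off one at a time:
\[
\binom{n}{k}\left(\frac{\lambda_n}{n}\right)^{k}\left(1-\frac{\lambda_n}{n}\right)^{n-k}
=\frac{\lambda_n^{k}}{k!}\cdot\underbrace{\frac{n(n-1)\cdots(n-k+1)}{n^{k}}}_{(\mathrm{a})}\cdot\underbrace{\left(1-\frac{\lambda_n}{n}\right)^{n}}_{(\mathrm{b})}\cdot\underbrace{\left(1-\frac{\lambda_n}{n}\right)^{-k}}_{(\mathrm{c})}.
\]
The prefactor $\lambda_n^{k}/k!$ tends to $\lambda^{k}/k!$ since $\lambda_n\rightarrow\lambda$; factor $(\mathrm{a})=\prod_{j=0}^{k-1}\bigl(1-\tfrac{j}{n}\bigr)$ is a finite product of terms each tending to $1$, hence tends to $1$; and factor $(\mathrm{c})\rightarrow 1$ because $\lambda_n/n\rightarrow 0$ while the exponent $-k$ is constant. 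By the algebra of limits it then remains only to show that factor $(\mathrm{b})$ tends to $e^{-\lambda}$, after which multiplying the four limits yields $e^{-\lambda}\lambda^{k}/k!$.

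I expect the only real obstacle to be factor $(\mathrm{b})$, i.e.\ establishing $\bigl(1-\lambda_n/n\bigr)^{n}\rightarrow e^{-\lambda}$ when $\lambda_n$ merely converges to $\lambda$ rather than being constant. I would handle this by taking logarithms: for $n$ large enough that $\lambda_n/n<\tfrac12$, the expansion $\log(1-x)=-x+O(x^{2})$ gives $n\log\bigl(1-\lambda_n/n\bigr)=-\lambda_n+O\bigl(\lambda_n^{2}/n\bigr)$, and since $\lambda_n\rightarrow\lambda$ and $\lambda_n^{2}/n\rightarrow 0$ the right-hand side tends to $-\lambda$; continuity of $\exp$ then yields $(\mathrm{b})\rightarrow e^{-\lambda}$. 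An alternative is to squeeze $\bigl(1-\lambda_n/n\bigr)^{n}$ between $\bigl(1-(\lambda+\varepsilon)/n\bigr)^{n}$ and $\bigl(1-(\lambda-\varepsilon)/n\bigr)^{n}$ for arbitrary $\varepsilon>0$, using the elementary limit $(1-c/n)^{n}\rightarrow e^{-c}$, and let $\varepsilon\downarrow 0$. A minor point to dispatch in passing is the degenerate case $\lambda=0$, where the target is $\mathds{1}(k=0)$ and the same factorization still applies.
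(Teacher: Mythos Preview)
Your argument is correct and is the classical direct computation: identify $S_n$ as binomial, factor the binomial probability into four pieces, and pass to the limit in each. The handling of factor~$(\mathrm{b})$ via $n\log(1-\lambda_n/n)=-\lambda_n+O(\lambda_n^2/n)$ is clean and covers the case where $np_n$ is not exactly $\lambda$ but only converges to it.

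There is nothing to compare against, however: the paper states this theorem in its preliminaries (Section~\ref{subsec:poisson}) as a standard background result and does not supply a proof. The proofs collected in Section~\ref{sec:proofs} cover only Proposition~\ref{pro:driftgen}, Theorem~\ref{theo1}, Corollaries~\ref{cor1} and~\ref{cor2}, and Theorem~\ref{theo:drifts}. So your write-up stands on its own as a self-contained justification of a result the paper takes for granted.
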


We now define counting processes. Consider a set of points (representing events or arrivals) randomly located on the real line $\mathbb{R}_{\geq 0}$, which represents time. We define the process that counts the number of such points in the interval $[0,t]$, $t\in \mathbb{R}_{\geq 0}$. In the following assume $T=\mathbb{R}_{\geq 0}$.

\begin{definition}[Counting Processes]
A $\mathbb{N}$-valued stochastic process $\{N(t):t\in T\}$ is a counting process if:
\begin{enumerate}
\item $\mathbb{P}\{N(0)=0\}=1$
\item For $0\leq s\leq t$, $N(t)-N(s)$ is the number of points in the interval $(s,t]$.
\end{enumerate}
\end{definition}

Any counting process is non-negative, non-decreasing and right-continuous.

\begin{definition}[Independent Increments]
Let $\{N(t):t\in T\}$ be a counting process, and let $t_0,t_1,\ldots,t_n\in T$ be increasing times in which $t_0=0$. We say $N(t)$ has independent increments if and only if for all $i\in\{1,\ldots,n\}$ the random variables:
\[N(t_{i})-N(t_{i-1})\]
are mutually independent. In addition, $N(t)$ is {\it stationary} if for equally distanced $t_0,\ldots,t_n$ the increments $N(t_{i})-N(t_{i-1})$ are equal in distribution.
\end{definition}

\begin{definition}[Time-homogeneous Poisson Processes]
Let $\lambda>0$. A stationary counting process $\{\mathcal{N}(t):t\in T\}$ is called a time-homogeneous Poisson process, or simply a Poisson process with rate or intensity $\lambda$ if it satisfies the following additional properties:
\begin{itemize}
\item $\mathcal{N}(t)$ has independent increments.
\item Almost surely, in an infinitesimal time interval $dt$ at most one point occurs with probability $\lambda dt$.
\end{itemize}
\end{definition}

\iffalse
For a point $i\in\{1,\ldots,n\}$, let $t_i$ denote its location on the real line (its occurrence time). The random variables $S_i:\Omega\rightarrow T$ where $S_i=t_{i}-t_{i-1}$ are called the {\it interarrival times} of the Poisson process. The independent increment property of a Poisson process implies that random variables $S_i$ should satisfy the memoryless property i.e., for $s,t\geq 0$:
\[\mathbb{P}\{S_i>s+t\mid S_i>s\}=\mathbb{P}\{S_i>t\}\]
The only continuous-time distribution which satisfies the above property is the exponential distribution, with:
\[\mathbb{P}\{S_i\leq t\}=F(t)=1-e^{-\lambda t}.\]
\fi

For a Poisson process, the number of observations over the interval $(s,t]$ is discrete and is distributed according to a Poisson distribution:

\[\mathbb{P}\{\mathcal{N}(t)-\mathcal{N}(s)=k\}=\frac{(\lambda(t-s))^k}{k!}e^{-\lambda(t-s)}.\]

Intuitively speaking, since a step in continuous time satisfies $dt\rightarrow 0$, and in intervals of size $dt$ almost surely only 0 or 1 points may occur, one can think of the process in any interval $(s,t]$ as an infinite sequence of Bernoulli trials and then apply the Poisson limit theorem to derive the above probability.

The Poisson process $\{\mathcal{Y}(t):t\in T\}$ with $\lambda=1$ is called the {\it unit Poisson process}. Let $\mathcal{N}(t)$ be a Poisson process  with $\Lambda(t)=\mathbb{E}[\mathcal{N}(t)]$ and $\Lambda(0)=0$. The following relation holds between $\mathcal{N}(t)$ and the unit Poisson process:
\begin{equation}\label{eq:unitpoisson}
\mathcal{N}(t)=\mathcal{Y}(\Lambda(t)).
\end{equation}

For a unit Poisson process we have $\mathbb{E}[\mathcal{Y}(t)]=t$. A {\it compensated unit Poisson process} $\tilde{\mathcal{Y}}(t)$ is defined as:
\[\tilde{\mathcal{Y}}(t)=\mathcal{Y}(t)-t,\]
For which for all $t\in T$:
\[\mathbb{E}[\tilde{\mathcal{Y}}(t)]=0.\]
Based on equation~(\ref{eq:discmart}) and due to the independent increment property, $\tilde{\mathcal{Y}}(t)$ is a martingale, since for any $s<t$ with $s,t\in T$:
\begin{flalign*}
\mathbb{E}\left[\tilde{\mathcal{Y}}(t)-\tilde{\mathcal{Y}}(s)\mid \mathcal{F}_s\right]&=\mathbb{E}[\mathcal{Y}(t)-\mathcal{Y}(s)\mid \mathcal{F}_s]-(t-s)\\
&=\mathbb{E}[\mathcal{Y}(t-s)\mid \mathcal{F}_s]-(t-s)=0.
\end{flalign*}

\subsection{Continuity and convergence}\label{subsec:con}
In this section we quickly review a number of useful results in the field of real analysis, as the proofs we give later rely heavily on them.

\begin{definition}
[Right Continuous Functions with Left Limits]
Let $f:\mathbb{R}\rightarrow\mathbb{R}^n$ be a function. Then $f$ is a right continuous function with left limits (c\`{a}dl\`{a}g\footnote{continue \`{a} droite, limite \`{a} gauche}) if and only if for every $x\in\mathbb{R}$:
\begin{itemize}
\item $f(x-)\defeq\lim_{a\rightarrow x^{-}}f(a)$ exists, and
\item $f(x+)\defeq\lim_{a\rightarrow x^{+}}f(a)$ exists and $f(x+)=f(x)$.
\end{itemize}
\end{definition}

\begin{definition}
[Lipschitz Continuity]
A function $f:\mathbb{R}^n\rightarrow\mathbb{R}^n$ is (globally) Lipschitz continuous on $\mathbb{R}^n$ if and only if:
\[\exists L\in\mathbb{R}.\forall x_1,x_2\in\mathbb{R}^n.\,\lvert f(x_2)-f(x_1)\rvert\leq L\,\lvert x_2-x_1\rvert\]
\end{definition}
For a Lipschitz continuous function $f$, we call the constant $L$ found above the {\it Lipschitz constant}. If a function $f$ has bounded first derivatives everywhere on $\mathbb{R}^n$, it is guaranteed to be Lipschitz continuous.

\begin{theorem}[Picard-Lindel\"of Theorem]\label{the:piclin}
Let $f$ be a Lipschtiz continuous function in $\mathbb{R}^n$, and consider the following ordinary differential equation:
\[\frac{d}{dt}x(t)=f(x(t))~,~~x(t_0)=x_0,\]
then for some $\epsilon>0$, there exists a unique solution $x(t)$ to the initial value problem on the interval $[t_0-\epsilon,t_0+\epsilon]$.
\end{theorem}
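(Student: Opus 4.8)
The plan is to recast the initial value problem as a fixed-point equation for an integral operator and solve it by Picard iteration. First I would observe that a continuous function $x:[t_0-\epsilon,t_0+\epsilon]\to\mathbb{R}^n$ is a $C^1$ solution of the IVP if and only if it satisfies the integral equation $x(t)=x_0+\int_{t_0}^{t}f(x(s))\,ds$: the forward direction is the fundamental theorem of calculus, and for the converse, continuity of $x$ together with (Lipschitz, hence) continuity of $f$ makes $s\mapsto f(x(s))$ continuous, so the right-hand side is $C^1$ with derivative $f(x(t))$ and value $x_0$ at $t_0$. This reduces the theorem to producing a unique continuous fixed point of the operator $\Phi$ given by $(\Phi x)(t)=x_0+\int_{t_0}^{t}f(x(s))\,ds$ on $C([t_0-\epsilon,t_0+\epsilon],\mathbb{R}^n)$ with the supremum norm.

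Next I would build the Picard iterates $\varphi_0(t)\equiv x_0$ and $\varphi_{k+1}=\Phi\varphi_k$, and establish by induction on $k$ the pointwise estimate
\[
\lvert \varphi_{k+1}(t)-\varphi_k(t)\rvert \;\le\; \lvert f(x_0)\rvert\,\frac{L^{k}\lvert t-t_0\rvert^{k+1}}{(k+1)!}\qquad(\lvert t-t_0\rvert\le\epsilon),
\]
the base case being $\lvert\varphi_1(t)-\varphi_0(t)\rvert=\lvert t-t_0\rvert\,\lvert f(x_0)\rvert$ and the inductive step following from $\lvert\varphi_{k+1}(t)-\varphi_k(t)\rvert\le L\int_{t_0}^{t}\lvert\varphi_k(s)-\varphi_{k-1}(s)\rvert\,ds$ via the Lipschitz bound. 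Taking suprema shows $\sup_{\lvert t-t_0\rvert\le\epsilon}\lvert\varphi_{k+1}-\varphi_k\rvert\le\lvert f(x_0)\rvert L^{k}\epsilon^{k+1}/(k+1)!$, which is the general term of a convergent series (comparable to $e^{L\epsilon}$), so $(\varphi_k)$ is uniformly Cauchy on $[t_0-\epsilon,t_0+\epsilon]$ and converges uniformly to a continuous limit $x$. I would then pass to the limit in $\varphi_{k+1}=\Phi\varphi_k$: uniform convergence $\varphi_k\to x$ and the estimate $\lvert f(\varphi_k(s))-f(x(s))\rvert\le L\lvert\varphi_k(s)-x(s)\rvert$ justify interchanging the limit with the integral, giving $x=\Phi x$, i.e. $x$ solves the integral equation and hence the IVP. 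Note that the global Lipschitz hypothesis gives the linear growth bound $\lvert f(y)\rvert\le\lvert f(x_0)\rvert+L\lvert y-x_0\rvert$, so no localizing box is needed and $\epsilon>0$ may in fact be taken arbitrary; alternatively, choosing $\epsilon<1/L$ makes $\Phi$ a contraction in the sup norm and the contraction mapping principle gives the same conclusion at once.

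For uniqueness I would take two solutions $x,y$ on the interval, subtract the integral equations to get $\lvert x(t)-y(t)\rvert\le L\int_{t_0}^{t}\lvert x(s)-y(s)\rvert\,ds$, and apply Gr\"onwall's inequality to conclude $\lvert x(t)-y(t)\rvert\equiv 0$. If one prefers not to invoke Gr\"onwall, iterating this same inequality yields $\lvert x(t)-y(t)\rvert\le \lVert x-y\rVert_\infty (L\epsilon)^{k}/k!\to 0$, giving the same result.

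I expect the only real obstacle here to be bookkeeping rather than anything conceptual: pinning down the factorial constant in the induction exactly, and justifying the limit–integral interchange cleanly using uniform convergence plus the Lipschitz estimate. A minor point to stay careful about is keeping the two roles of ``$x_0$'' distinct — the initial point in $\mathbb{R}^n$ versus the constant starting function $\varphi_0$ — and running every estimate symmetrically about $t_0$ so that the interval $[t_0-\epsilon,t_0]$ is covered on the same footing as $[t_0,t_0+\epsilon]$.
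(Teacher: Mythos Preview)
Your proof is correct and is the classical Picard iteration argument; there is no gap. However, the paper does not actually prove this statement: Theorem~\ref{the:piclin} appears in the preliminaries (Section~\ref{subsec:con}) as a quoted background result from real analysis, stated without proof and used only to guarantee existence and uniqueness of the deterministic approximation $\phi^{(N)}$ in Definition~\ref{def:approx}. So there is nothing in the paper to compare your argument against --- your write-up simply supplies the standard proof of a result the authors take for granted.
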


\begin{lemma}[Gr\"onwall's Inequality]\label{lem:gronwall}
Let $f$ be a function that is bounded and integrable on the interval $[0,T]$, if 
\[f(T)\leq C+D\int_0^{T}f(t)dt\]
then 
\[f(T)\leq Ce^{DT}\]
\end{lemma}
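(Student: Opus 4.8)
The plan is to use the standard integrating-factor argument, reading the hypothesis in its natural form: $f(s)\le C+D\int_0^s f(t)\,dt$ for \emph{every} $s\in[0,T]$ (the inequality at $s=T$ alone is not enough, so I would first record this as the working assumption, together with $C,D\ge 0$, both of which hold in the applications where the lemma is used). Define the auxiliary function $g(s)=C+D\int_0^s f(t)\,dt$ on $[0,T]$. Since $f$ is bounded and integrable, $g$ is absolutely continuous with $g'(s)=D f(s)$ for almost every $s$, and by hypothesis $f(s)\le g(s)$ pointwise, so $g'(s)\le D\,g(s)$ almost everywhere.

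Next I would introduce the integrating factor $e^{-Ds}$ and set $h(s)=e^{-Ds}g(s)$. Then $h$ is absolutely continuous and $h'(s)=e^{-Ds}\bigl(g'(s)-D\,g(s)\bigr)\le 0$ for almost every $s$, so $h$ is non-increasing on $[0,T]$. Hence $h(T)\le h(0)=g(0)=C$, i.e. $g(T)\le C e^{DT}$, and combining with $f(T)\le g(T)$ yields $f(T)\le C e^{DT}$. Running the same argument with $T$ replaced by an arbitrary $s$ even gives the pointwise bound $f(s)\le Ce^{Ds}$.

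An alternative worth mentioning, closer to the elementary tone of the paper and to the Picard-Lindel\"of iteration of Theorem~\ref{the:piclin}, is to substitute the inequality into itself repeatedly. After $n$ substitutions one obtains
\[f(T)\le C\sum_{k=0}^{n}\frac{(DT)^k}{k!}+D^{n+1}\int_0^{T}\!\!\int_0^{t_1}\!\!\cdots\int_0^{t_n} f(t_{n+1})\,dt_{n+1}\cdots dt_1.\]
Using that $f$ is bounded, say $\lvert f\rvert\le M$ on $[0,T]$, the remainder is at most $M(DT)^{n+1}/(n+1)!$, since the iterated integral ranges over an $(n+1)$-dimensional simplex of volume $T^{n+1}/(n+1)!$ (this is where Tonelli's theorem is invoked). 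Letting $n\to\infty$, the remainder vanishes and the partial sums converge to $Ce^{DT}$.

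The only genuine subtlety is the one already flagged — that the displayed hypothesis must hold for all $s\in[0,T]$, and that $D\ge 0$ (and typically $C\ge 0$) — which is a matter of stating the lemma correctly rather than a real obstacle in the proof. The analytic ingredients (absolute continuity of an indefinite Lebesgue integral together with the fundamental theorem of calculus in the first proof, or Tonelli's theorem for the simplex volume in the second) are entirely standard and are already supported by the Lebesgue-integration machinery set up in Section~\ref{subsec:measure}; beyond that I expect only routine bookkeeping.
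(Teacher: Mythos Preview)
Your argument is correct and is the standard integrating-factor proof of Gr\"onwall's inequality; the iterative alternative you sketch is equally valid. You also rightly flag that the hypothesis must be read as holding for all $s\in[0,T]$ and that $D\ge 0$ is implicitly assumed.

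However, there is nothing to compare against: the paper does not supply its own proof of Lemma~\ref{lem:gronwall}. It is stated in the preliminaries (Section~\ref{subsec:con}) as a standard result quoted from the literature (the authors attribute the formulation to Kurtz's lecture notes), and is then invoked without proof in the derivation of Theorem~\ref{theo1}. So your proposal is not merely consistent with the paper's approach --- it simply fills in a step the paper deliberately leaves to the reader.
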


\begin{definition}[Uniform Convergence]
Let $\left\{f_n\right\}$ be a sequence of functions on set $S$. We say the sequence converges uniformly to function $f$ if:
\[\lim_{n\rightarrow\infty}\sup_{x}\lvert f_n(x)-f(x)\rvert=0,\]
that is, the speed of convergence of $f_n(x)$ to $f(x)$ does not depend on $x$.
\end{definition}
If the sequence of functions $\{f_n\}$ are continuous and converge uniformly to $f$, then the limiting function $f$ is continuous as well.

\begin{theorem}[Lebesgue Dominated Convergence Theorem]\label{the:lebesgue}
Let $\left\{f_n\right\}$ be a sequence of measurable functions on set $S$, which converge to measurable function $f$. Let $g$ be an integrable function such that for all $n$ and for all $x\in S$: $\lvert f_n(x)\rvert\leq \lvert g(x)\rvert$. Then $f$ is integrable and:
\[\lim_{n\rightarrow\infty}\int_{S}\lvert f_n-f\rvert=0.\]
\end{theorem}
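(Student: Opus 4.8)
The plan is to follow the classical route to the Dominated Convergence Theorem through Fatou's lemma, which I would in turn obtain from the sup-over-simple-functions construction of the Lebesgue integral given above. First I would dispose of the measurability and integrability bookkeeping: since $\lvert f_n(x)\rvert\le\lvert g(x)\rvert$ for every $n$ and every $x\in S$, taking the pointwise limit gives $\lvert f(x)\rvert\le\lvert g(x)\rvert$, and because $f$ is measurable and $g$ integrable, $f$ is integrable; likewise every $f_n$ is integrable. Moreover $\lvert f_n-f\rvert\le\lvert f_n\rvert+\lvert f\rvert\le 2\lvert g\rvert$, so $\int_S\lvert f_n-f\rvert\,d\mu\le 2\int_S\lvert g\rvert\,d\mu<\infty$; this finiteness is what makes the cancellation step below legitimate.

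The heart of the argument is the usual trick of applying Fatou's lemma to an auxiliary sequence of non-negative functions. Put $h_n\defeq 2\lvert g\rvert-\lvert f_n-f\rvert$; by the inequality just noted $h_n\ge 0$ pointwise, and since $f_n\to f$ pointwise (so $\lvert f_n-f\rvert\to 0$) we have $\liminf_n h_n=2\lvert g\rvert$. Fatou's lemma then yields
\begin{align*}
2\int_S\lvert g\rvert\,d\mu\;=\;\int_S\liminf_n h_n\,d\mu
&\;\le\;\liminf_n\int_S h_n\,d\mu\\
&\;=\;2\int_S\lvert g\rvert\,d\mu\;-\;\limsup_n\int_S\lvert f_n-f\rvert\,d\mu .
\end{align*}
Since $\int_S\lvert g\rvert\,d\mu$ is finite it cancels from both sides, leaving $\limsup_n\int_S\lvert f_n-f\rvert\,d\mu\le 0$; as the integrands are non-negative, this forces $\lim_{n\to\infty}\int_S\lvert f_n-f\rvert\,d\mu=0$, which is the assertion.

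What remains — and this is really bookkeeping rather than a genuine obstacle — is to justify Fatou's lemma, which is not among the results quoted earlier. I would first derive the Monotone Convergence Theorem directly from the definition $\int_S u\,d\mu=\sup\{\int_S\textbf{S}\,d\mu:0\le\textbf{S}\le u\text{ simple}\}$: if $0\le u_k\uparrow u$ pointwise then $\lim_k\int_S u_k\,d\mu=\int_S u\,d\mu$, where the nontrivial direction uses that for a simple $\textbf{S}\le u$ and $c\in(0,1)$ the sets $\{u_k\ge c\,\textbf{S}\}$ exhaust $S$. Applying this to $u_k\defeq\inf_{m\ge k}h_m$, which increases to $\liminf_n h_n$, and using $u_k\le h_k$ gives $\int_S\liminf_n h_n\,d\mu\le\liminf_n\int_S h_n\,d\mu$, i.e.\ Fatou. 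The only point needing a word of care is the reading of ``converge to'': if the hypothesis is only $\mu$-almost-everywhere convergence, the exceptional null set contributes nothing to any integral above, so the proof is unchanged after discarding it. Alternatively, all of this can simply be cited from \cite{royden2010real}.
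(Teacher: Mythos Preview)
Your argument is correct and is the standard Fatou-lemma route to the Dominated Convergence Theorem. However, the paper does not actually prove this statement: it appears in the preliminaries section (Section~\ref{sec:pre}) as a quoted result from real analysis, stated without proof and attributed to the reference \cite{royden2010real}. In other words, your closing remark --- ``Alternatively, all of this can simply be cited from \cite{royden2010real}'' --- is exactly the route the paper takes; there is no in-paper proof to compare against.
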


Finally, we give several notions of convergence for random variable, and quickly overview their relations. 
\begin{definition}
[Convergence in $L^p$, in Probability and in Distribution] Let $\{X_n\}$ be a sequence of random variables, and $X$ be a random variable. Then for $p\in\mathbb{N}_{>0}$, $\{X_n\}$ converges to $X$ in $L^p$ or $p$-th moment if:
\[\lim_{n\rightarrow\infty}\mathbb{E}\left[\lvert X_n-X\rvert^p\right]=0,\]
we say $\{X_n\}$ converges in probability to $X$ if for any $\epsilon>0$:
\[\lim_{n\rightarrow\infty}\mathbb{P}\left\{\left\lvert X_n-X\right\rvert\right\}=0,\]
and we say $\{X_n\}$ converges in distribution to $X$ if for all $x\subset\mathbb{R}^m$:
\[\lim_{n\rightarrow\infty} (P\circ X_n)^{-1}x=(P\circ X)^{-1}x.\]
\end{definition}
Convergence in $L^2$ is also called {\it convergence in mean-square}. If possible, proving convergence in mean-square is very useful, since it implies convergence in probability, which in turn implies convergence in distribution. In this sense convergence in distribution is also often referred to as {\it weak convergence}.

\section{Mean field approximation}\label{sec:method}
In this section, we present the stochastic model of a system and its mean field approximations. For the most part, our notation agrees with~\cite{benaim2008class}. A list of objects appearing in the mathematical discussions that follow are given in Table~\ref{tbl:objects}.

\begin{table}
\caption{Table of objects and their short description.}\label{tbl:objects}
\centering
	\def\arraystretch{1.2}
\begin{tabular}{ l  c  p{11cm} }
	\toprule
	$T=\mathbb{N}$ &  & Points corresponding to local time-slots\\
	$T_G\subseteq\mathbb{Q}_{\geq 0}$ &  & Points on the real line corresponding to global time-slots\\
	$D\in\mathbb{N}_{\geq 1}$ &  & Time resolution = number of global time-slots in a unit interval\\
	$\epsilon=\frac{1}{D}$ &  & Length of a global time-slot\\
	$N\in\mathbb{N}_{\geq 1}$ &  & System size = number of agents\\
	$\mathcal{S}=\{1,\ldots,I\}$ &  & State space of agents, with $I\in\mathbb{N}$ states\\	
	$\left\{X_i^{(N)}(t):t\in T\right\}$ &  & Process corresponding to agent $i$, with $i\in\{1,\ldots,N\}$\\
	$K_i$ &  & Transition map of $X_i^{(N)}(t)$\\
	$\left\{\hat{X}_i^{(N)}(t):t\in T_G\right\}$ &  & Modified process corresponding to agent $i$\\
	$\hat{K}_i$ &  & Transition map of $\hat{X}_i^{(N)}(t)$\\
	$\left\{Y^{(N)}(t):t\in T_G\right\}$ & & Process for the system of $N$ agents, on $\mathcal{S}^N$\\
	$\mathcal{K}^{(N)}$ & & Transition map of $Y^{(N)}(t)$\\
	$\Delta$ & & Set of occupancy measures\\
	$\left\{M^{(N)}(t):t\in T_G\right\}$ & & Normalised population process on $\Delta^{(N)}\subset\Delta$\\
	$\hat{\mathcal{K}}^{(N)}$ & & Transition map of $M^{(N)}(t)$\\
	$P_1^{(N)}$ & & Transition map of the agent model $\left\{\left(\hat{X}_1^{(N)},M^{(N)}(t)\right):t\in T_G\right\}$\\
	$P_{s,s'}^{(N)}$ & & Agent transition map, with $s,s'\in\mathcal{S}$\\
	$Q_{s,s'}^{(N)}$ & & Infinitesimal agent transition map, with $s,s'\in\mathcal{S}$\\
	$\left\{\bar{M}^{(N)}(t):t\in\mathbb{R}_{\geq 0}\right\}$ & & Normalised population process with continuous paths\\
	$\left\{W^{(N)}(t):t\in T_G\right\}$ & & Object (agent) state-change frequency in interval $[0,t]$\\
	$\hat{F}^{(N)}$ & & Expected instantaneous change in system state\\
	$F^{(N)}$ & & Drift of the normalized population process\\
	$\Phi\subseteq\{g:\mathbb{R}_{\geq 0}\rightarrow\Delta\}$ & & Set of deterministic approximations\\
	$F^{*}$ & & The limit of the sequence of drifts $\left\{F^{(N)}\right\}$\\
	$\rho_N$ & & The probability measure induced by $Y^{(N)}(t)$\\
	$\varepsilon_N$ & & The empirical measure derived from $Y^{(N)}(t)$\\
	$\tilde{F}^{(N)}_{s,s'}$ & & The Poisson mean of intensity from $s$ to $s'$ with $s,s'\in\mathcal{S}$\\
	$\tilde{F}^{(N)}$ & & The mean drift of the normalised population process\\
	\bottomrule
\end{tabular}
\end{table}

\subsection{Agent processes and the clock independence assumption}\label{subsec:independentevent}
Let the set $T=\mathbb{N}$ be discrete and let parameter $N\in\mathbb{N}_{\geq 1}$ be the {\it system size}. The elements of $T$ are called time-slots. Let $\mathcal{S}=\{1,\ldots,I\}$ be a finite set of states. For $i\in\{1,\ldots,N\}$, let $\left\{X^{(N)}_i(t):t\in T\right\}$ be $\mathcal{S}$-valued discrete-time time-homogeneous Markov chains (DTMCs). Each stochastic process $X^{(N)}_i(t)$ describes the behaviour of agent $i$ in the system with $N$ agents.

Take each process $X^{(N)}_i(t)$ to be described by a transition map $K_i:\mathcal{S}^N\times\mathcal{S}\rightarrow [0,1]$. In each time-slot (indexed by members of $T$), the process chooses the next state $s\in\mathcal{S}$ with probability $K_i(\vec{v},s)$, where the vector of states $\vec{v}\in\mathcal{S}^N$ is the state of the entire system (including agent $i$'s current state).

There are generally two ways in which we can relate the time-slots across the processes in such a system:
\begin{enumerate}
\item \emph{The time-slots are fully synchronized} and the $N$ processes simultaneously update their states.
\item \emph{Processes have independent time-slots, which occur at the same rate over sufficiently long intervals of time}.\label{item:iea}
\end{enumerate}

The two approaches may lead to systems with different behaviours (see remark below). For a discussion on the approximation of systems with simultaneous update (or synchronous DTMCs) refer to~\cite{bortolussi2013continuous,le2007generic}. Our discussion is about systems with independent time-slots, since this assumption allows us to embed the discrete-time description of agents' behaviours in a continuous-time setting. 

Formally, the type~\ref{item:iea} behaviour can be stated as follows. For two processes $i$ and $i'$ where $i\neq i'$, if process $i$ does a transition in an instant of time then process $i'$ almost never does a transition simultaneously. Here we say that these systems satisfy the {\it clock independence assumption}.

Technically, we enforce the clock independence assumption through scaling the duration of time-slots and modifying agent transition probabilities as follows. Let $D\in\mathbb{N}_{\geq 1}$ be the time resolution, and let $\epsilon\in\mathbb{Q}_{\geq 0}$ be a positive rational number (a probability) defined as $\epsilon=\frac{1}{D}$. Let $T_G\subseteq\mathbb{Q}_{\geq 0}$ be the countable set:
\[T_G=\left\{0,\epsilon,2\epsilon,\ldots\right\}.\]
We call the set $T_G$ the {\it system} or {\it global time}, as opposed to the {\it agent} or {\it local time} $T$. Next, let the probability of an agent doing a transition in a time-slot be $\epsilon$. In this new setting, for $1\leq i\leq N$ define stochastic processes $\left\{\hat{X}^{(N)}_i(t):t\in T_G\right\}$, each with transition maps $\hat{K}_i:\mathcal{S}^N\times\mathcal{S}\rightarrow [0,1]$, such that for all $\vec{v}\in\mathcal{S}^N$ and $s\in\mathcal{S}$:
\begin{equation*}
\hat{K}_i(\vec{v},s)=
\begin{cases}
\epsilon\, K_i(\vec{v},s)\quad\quad\quad\quad\quad~\text{if}~s\neq \vec{v}_i,\\
(1-\epsilon)+\epsilon\, K_i(\vec{v},s)~~~~~~\text{if}~s=\vec{v}_i.\\
\end{cases}
\end{equation*}

In the new setting, let $E$ be the event that agent $i$ does a transition in a time-slot, and $E'$ be the event that agent $i'\neq i$ does a transition exactly in the same time-slot in $T_G$. Then by independence of agent transition maps:
\[\mathbb{P}\{E'|E\}=\mathbb{P}\{E'\}=\epsilon.\]
Observe that the clock independence assumption is satisfied as $D\rightarrow\infty$ (i.e., $\epsilon\rightarrow0$):
\[\lim_{D\rightarrow\infty}\mathbb{P}\{E'|E\}=0.\]

\begin{remark}
A condition under which systems with simultaneous updates and the clock independence assumption show divergent behaviours is when agent processes are not aperiodic. In such cases the system satisfying the clock independence assumption becomes aperiodic, while the system with simultaneous updates is not aperiodic.
\end{remark}

Let $\left\{Y^{(N)}(t):t\in T_G\right\}$ be a stochastic process with states $Y^{(N)}(t)=\left(\hat{X}^{(N)}_1(t),\ldots,\hat{X}^{(N)}_N(t)\right)$. The process $Y^{(N)}(t)$ represents the behaviour of the entire system, and is a time-homogeneous discrete-time Markov process with transition map $\mathcal{K}^{(N)}:\mathcal{S}^{N}\times\mathcal{S}^{N}\rightarrow [0,1]$ in which for $\vec{v},\vec{v}\,'\in \mathcal{S}^{N}$: 
\[\mathcal{K}^{(N)}(\vec{v},\vec{v}\,')=\prod_{i=1}^{N}\hat{K}_i(\vec{v},\vec{v}\,'_i)\,.\]

\subsection{Mean field interaction models}\label{subsec:popprocess}
In this part we define mean field interaction models \cite{benaim2008class}, which comprise the class of processes $Y^{(N)}(t)$ for which we find the mean field approximations.

Let $\pi:\{1,\ldots,N\}\rightarrow\{1,\ldots,N\}$ be a bijection. The function $\pi$ is called a permutation over the set $\{1,\ldots,N\}$. Additionally, for a vector $\vec{v}=(s_1,\ldots,s_N)$ define $\pi(\vec{v})$ as:
\[\pi(\vec{v})=\big(s_{\pi(1)},\ldots,s_{\pi(N)}\big)\,.\]

\begin{definition}[Mean Field Interaction Models~\cite{benaim2008class}]
Let $Y^{(N)}(t)$ be the process defined earlier, and let $\pi$ be any permutation over the set $\{1,\ldots,N\}$. If for all $\vec{v},\vec{v}\,'\in\mathcal{S}^N$:
\[\mathcal{K}^{(N)}(\vec{v},\vec{v}\,')=\mathcal{K}^{(N)}(\pi(\vec{v}),\pi(\vec{v}\,'))\]
holds, $Y^{(N)}(t)$ is called a mean field interaction model with $N$ objects.
\end{definition}

It follows from the above definition that entries in $\mathcal{K}^{(N)}$ may depend on the number of agents in each state, but not on the state of a certain agent. Let $\Delta=\big\{\vec{m}\in\mathbb{R}^{I}:\sum_{s\in\mathcal{S}}\vec{m}_s=1\wedge\forall s.\vec{m}_s\geq 0\big\}$ be a set of vectors, which we call the set of {\it occupancy measures}. For a system of size $N$, take the countable subset $\Delta^{(N)}=\big\{\vec{m}\in\mathbb{R}^{I}:\sum_{s\in\mathcal{S}}\vec{m}_s=1\wedge\forall s.(\vec{m}_s\geq 0\wedge N\vec{m}_s\in\mathbb{N})\big\}$. The set $\Delta^{(N)}\subset \Delta$ is an alternative representation of the state space of the system, in which for $\vec{m}\in\Delta^{(N)}$ and $i\in \mathcal{S}$ the value $\vec{m}_i$ expresses the proportion of agents that are in state $i$. For a mean field interaction model we define the {\it normalized population process} $\left\{M^{(N)}(t):t\in T_G\right\}$ on $\Delta^{(N)}$ such that for $s\in\mathcal{S}$:
\begin{equation}\label{eq:normal}
M^{(N)}_s(t)=\frac{1}{N}\sum_{1\leq n\leq N}\mathds{1}\left(\hat{X}^{(N)}_n(t)=s\right),
\end{equation}
where $\mathds{1}$ is an indicator function\footnote{Indicator functions are defined in Section~\ref{subsec:measure}.}\footnote{For any system of size $N$, we should be careful about the occupancy measures that refer to invalid/meaningless states, since the set $\Delta$ is uncountable.}. Using the fact that $Y^{(N)}(t)$ is a mean field interaction model, it is possible to move back and forth between processes $Y^{(N)}(t)$ and $M^{(N)}(t)$. In the following discussion we illustrate this fact.

Let $\vec{m},\vec{m}'\in\Delta^{(N)}$ be occupancy measures. Let $\vec{v}\in\mathcal{S}^N$ be a vector defined as:
\[\vec{v}=(\overbrace{1,\ldots,1}^{N\vec{m}_1},\ldots,\overbrace{I,\ldots,I}^{N\vec{m}_I})\,,\]
where the notation $\overbrace{s,\ldots,s}^{\ell}$ shows a sequence of length $\ell$ in which all the elements have value $s$. Thus the vector $\vec{v}$ consists of $N\vec{m}_1$ elements with value 1,  $N\vec{m}_2$ elements with value 2, and so forth. Consider the set $V'$ where:
\[V'=\Big\{\vec{v}\,'\in \mathcal{S}^N\mid \exists\, \pi.(\vec{v}\,'=\pi(\overbrace{1,\ldots,1}^{N\vec{m}\,'_1},\ldots,\overbrace{I,\ldots,I}^{N\vec{m}\,'_I}))\Big\}\,.\]
The set $V'$ is the set of all possible configurations of the process $Y^{(N)}(t)$ which translate to $\vec{m}\,'$ after normalization.
For process $M^{(N)}(t)$ we define the new transition map $\hat{\mathcal{K}}^{(N)}:\Delta^{(N)}\times\Delta^{(N)}\rightarrow [0,1]$:
\begin{equation*}
\hat{\mathcal{K}}^{(N)}(\vec{m},\vec{m}\,')=\sum_{\vec{v}\,'\in V'}\mathcal{K}^{(N)}(\vec{v},\vec{v}\,')
\end{equation*}
Similarly, we define the process $\left\{(\hat{X}^{(N)}_1(t),M^{(N)}(t)):t\in T_G\right\}$ which models the behaviour of an agent in the context of a mean field interaction model, and call it the {\it agent model}. Let $\vec{m},\vec{m}'\in\Delta^{(N)}$ be occupancy measures and $s,s'\in\mathcal{S}$ be states, define the vector:
\[\vec{v}=(s,\overbrace{1,\ldots,1}^{N\vec{m}_1},\ldots,\overbrace{s\phantom{'},\ldots,s\phantom{'}}^{N\vec{m}_s-1},\ldots,\overbrace{I,\ldots,I}^{N\vec{m}_I}).\]
Consider the set $V'$ which consists of all the permutations of the vector:
\[V'=\Big\{\vec{v}\,'\in \mathcal{S}^N\mid (\vec{v}\,'_1=s')\wedge\exists\, \pi.(\vec{v}\,'=\pi(\overbrace{1,\ldots,1}^{N\vec{m}\,'_1},\ldots,\overbrace{I,\ldots,I}^{N\vec{m}\,'_I}))\Big\}.\]

The transition map of the agent model is the function $P^{(N)}_1:\mathcal{S}\times\Delta^{(N)}\times\mathcal{S}\times\Delta^{(N)}\rightarrow [0,1]$ defined as follows:
\begin{equation*}
P^{(N)}_1(s,\vec{m},s',\vec{m}\,')=
\begin{cases}
\sum_{\vec{v}\,'\in V'}\mathcal{K}^{(N)}(\vec{v},\vec{v}\,')\quad\text{if}~N\vec{m}\,'_{s'}\geq 1 \text{~and~}N\vec{m}_s\geq 1,\\
0\quad\quad\quad\quad\quad\quad\quad\quad\text{otherwise.}\\
\end{cases}
\end{equation*}

Based on $P^{(N)}_1$, for $s,s'\in\mathcal{S}$ we also define the probability:
\[P^{(N)}_{s,s'}(\vec{m})=\sum_{\vec{m}\,'\in\Delta}P^{(N)}_1(s,\vec{m},s',\vec{m}'),\]
that an agent moves from state $s$ to state $s'$, in the context $\vec{m}\in\Delta^{(N)}$.

For each $s,s'\in\mathcal{S}$ where $s\neq s'$, the expected proportion of the agents that are in state $s$ at time $t$ and move to state $s'$ over a unit time interval $[t,t+1]$ is:
\[\sum_{i=0}^{D}i\,\binom{D}{i}\left(P^{(N)}_{s,s'}(\vec{m})\right)^i\left(1-P^{(N)}_{s,s'}(\vec{m})\right)^{D-i}=D\,P^{(N)}_{s,s'}(\vec{m}).\]
By taking the clock independence assumption into account, for $s\neq s'$ define the functions $Q^{(N)}_{s,s'}:\Delta^{(N)}\rightarrow\mathbb{R}_{\geq 0}$ which for $\vec{m}\in\Delta^{(N)}$ satisfy:
\[Q^{(N)}_{s,s'}(\vec{m})=\lim_{D\rightarrow\infty}D\,P^{(N)}_{s,s'}(\vec{m}).\]
Note that due to the construction of the probabilities $P^{(N)}_{s,s'}(\vec{m})$, for $s\neq s'$ the limit always exists (i.e., it is in the interval $[0,1]$). The mapping $Q^{(N)}$ is called the infinitesimal agent transition map, which can be interpreted as a transition rate matrix, meaning that for an agent in state $s$ the time until it moves to state $s'$ converges to an exponentially distributed random variable with mean: $\frac{1}{Q^{(N)}_{s,s'}}$.

The time instants in the set $T_G$ are discrete. However,  as we see later in the approximation it is necessary to observe the population process $\left\{M^{(N)}(t):t\in T_G\right\}$ at continuous times $t\in\mathbb{R}_{\geq 0}$. Based on $M^{(N)}(t)$ we define a new stochastic process $\left\{\bar{M}^{(N)}(t):t\in\mathbb{R}_{\geq 0}\right\}$ with new sample paths which are right-continuous functions with left limits (c\`{a}dl\`{a}gs). For $t\in\mathbb{R}_{\geq 0}$ the process $\bar{M}^{(N)}(t)$ satisfies:
\[\bar{M}^{(N)}(t)=M^{(N)}\left(\epsilon\left\lfloor D t\right\rfloor\right).\]

To illustrate how one derives the mean field approximation of a system, we use a running example in this and the following sections. In the example of this section we start from an informal specification of an agent behaviour, and derive the transition map of the corresponding normalized population process.

\begin{example}\label{ex:interf1}
Consider a network of $N$ nodes (agents) operating on a single shared channel. The network is saturated, meaning that all the nodes always have messages to transmit. A node can be in one of the states $\mathcal{S}=\{1,2\}$. In state $1$ a node is waiting, and with probability $p_1$ decides to transmit a message. All communications start with the transmission of a message, and a successful communication is then marked by the receipt of an acknowledgement, whereas a failed communication ends in a timeout. Both cases occur in the space of a single time-slot. If the communication succeeds, the node will remain in state $1$ and wait to transmit the next message and if it fails, 
the node moves to state $2$ in which it tries retransmitting the message. A node in state $2$ retransmits the message with probability $p_2$. The node then essentially behaves in the same way as in state $1$.

The probability of success depends on the number of nodes currently using the channel, as follows. If $n\in\mathbb{N}$ nodes are using the channel, then the success probability of each participating communication is:
\begin{equation}\label{eq:ex1ps}
p_s(n)=2^{-n}\,,
\end{equation}
that is, the channel degrades in quality exponentially as the number of active nodes increases. A diagram representing the behaviour of each node is given in Figure~\ref{fig:markovchain}.

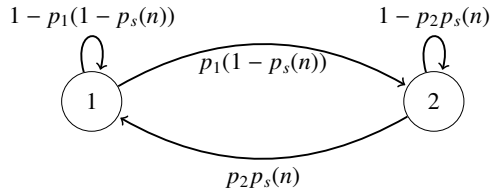
\begin{figure}[h]
\centering
\scalebox{0.9}{
\begin{tikzpicture}[shorten >=1pt,node distance=5cm,on grid,auto]
   \node[state] (q_0) {$1$};
   \node[state] (q_1) [right=of q_0] {$2$};   
    \path[->,thick]
	(q_0) edge [swap,bend left] node {$p_1(1-p_s(n))$} (q_1)
		  edge [loop above] node {$1-p_1(1-p_s(n))$} (q_0)    
    (q_1) edge [bend left] node {$p_2 p_s(n)$} (q_0)
    	  edge [loop above] node {$1-p_2 p_s(n)$} (q_1)
    ;
\end{tikzpicture}
}
\caption{The behaviour of a node in Example~\ref{ex:interf1}. The number of transmitting nodes is $n$.} \label{fig:markovchain}
\end{figure}

Let $\vec{v}\in\mathcal{S}^N$ be the state of the network, with $n_1$ nodes in state $1$ and $n_2=N-n_1$ nodes in state $2$. Let $tr_1(\vec{v})$ be the total number of nodes that are in state $1$ which decide to transmit a message in system state $\vec{v}$, then $tr_1(\vec{v})$ is a binomial random variable with distribution $B(n_1,p_1)$. In a similar fashion $tr_2(\vec{v})$, the total number of nodes in state $2$ which decide to transmit a message in system state $\vec{v}$ is $B(n_2,p_2)$ distributed. Then a communication in state $\vec{v}$ will succeed with probability $p_s\left(tr_1(\vec{v})+tr_2(\vec{v})\right)$. 

Let $s\in\mathcal{S}$ be the next state of a node $i$ ($1\leq i\leq N$), based on the description above the transition matrix for this node is:
\[K_{i}(\vec{v},s)=\begin{pmatrix}
1-p_1+p_1 p_s\left(tr_1(\vec{v})+tr_2(\vec{v})\right)~~ & ~~p_1 - p_1 p_s\left(tr_1(\vec{v})+tr_2(\vec{v})\right) \\
p_2 p_s\left(tr_1(\vec{v})+tr_2(\vec{v})\right) & 1-p_2 p_s\left(tr_1(\vec{v})+tr_2(\vec{v})\right) \\
\end{pmatrix}\]
where the row is determined by the element $\vec{v}_i$ (current state) and the column by $s$.

We use the clock independence assumption to compose the population process. To extend this assumption to the description of our radio network, we implicitly assume that the duration of message transmission is exponentially distributed, i.e., since the transitions are memoryless, the sojourn time of individuals in states is exponentially distributed. The modified transition matrix for node $i$ is:
\[\hat{K}_{i}(\vec{v},s)=\begin{pmatrix}
1-p_1\epsilon\,(1- p_s\left(tr_1(\vec{v})+tr_2(\vec{v})\right))~~ & ~~p_1\epsilon\,(1 - p_s\left(tr_1(\vec{v})+tr_2(\vec{v})\right)) \\
p_2\epsilon\, p_s\left(tr_1(\vec{v})+tr_2(\vec{v})\right) & 1-p_2\epsilon\, p_s\left(tr_1(\vec{v})+tr_2(\vec{v})\right) \\
\end{pmatrix}\]
In which the probability of success and failure have been scaled by a factor $\epsilon$. The composed system is a mean field interaction model. This is due to the definition of functions $tr_1$ and $tr_2$, which do not depend on states of specific nodes, but rather on the aggregate number of nodes in states $1$ and $2$.

Consider the normalized population model with occupancy measures $\Delta=\big\{\vec{m}\in\mathbb{R}^{2}:\sum_{i}\vec{m}_i=1\wedge\forall i.\vec{m}_i\geq 0\big\}$ and the corresponding subset $\Delta^{(N)}$. Let $\vec{m}\in\Delta^{(N)}$; then using (\ref{eq:normal}) when the system is in state $\vec{m}$ the total number of communicating agents is $X_1+X_2$ where $X_1 \sim B(N\vec{m}_1 ,p_1)$ and $X_2 \sim B(N\vec{m}_2 ,p_2)$. For an agent in state $s$ the rate of moving to an state $s'\neq s$ is given by:
\begin{equation}\label{eq:ex1result}
\begin{cases}Q^{(N)}_{1,2}(\vec{m})=
\mathbb{E}\left[p_1(1-p_s(X_1+X_2))\right]\,, \\
Q^{(N)}_{2,1}(\vec{m})=\mathbb{E}\left[p_2 p_s(X_1+X_2)\right]\,.
\end{cases}
\end{equation}

\end{example}

The map $Q^{(N)}$ derived here defines the stochastic behaviour of an agent in the population process. In the sections that follow, we use this map to derive the mean field approximation of the population process.

\subsection{Drift and the time evolution of the process $\bar{M}^{(N)}(t)$}\label{subsec:drift}
In this section we define the drift as a way to characterize the behaviour of the normalized population process $\bar{M}^{(N)}(t)$ in its first moment. This provides the basis for defining the mean drift, which is given in Section~\ref{sec:propagation}.

Define $W^{(N)}_{s,s'}(t)$ as the random number of objects which do a transition from state $s$ to state $s'$ in the system at time $t\in T_G$, i.e., 
\begin{equation}\label{eq:counter}
W^{(N)}_{s,s'}(t+\epsilon)=\sum_{k=1}^{N}\mathds{1}\left\{\hat{X}^{(N)}_k(t)=s,\hat{X}^{(N)}_k(t+\epsilon)=s'\right\}.
\end{equation}
The instantaneous changes of the system $M^{(N)}(t)$ can be tracked by the following random process:
\[M^{(N)}(t+\epsilon)-M^{(N)}(t)=\sum_{s,s'\in\mathcal{S},s\neq s'}\frac{W^{(N)}_{s,s'}(t+\epsilon)}{N}(\vec{e}_{s'}-\vec{e}_s)\]
where $\vec{e}_s$ is a unit vector of dimension $I$ with value 1 in position $s$. Then the expected value of the instantaneous change is the function $\hat{F}^{(N)}:\Delta^{(N)}\rightarrow\mathbb{R}^{I}$ where:
\begin{flalign*}
\hat{F}^{(N)}(\vec{m})&=\mathbb{E}\left[M^{(N)}(t+\epsilon)-M^{(N)}(t)\mid M^{(N)}(t)=\vec{m}\right]=\sum_{s,s'\in\mathcal{S}}\vec{m}_s P^{(N)}_{s,s'}(\vec{m})(\vec{e}_{s'}-\vec{e}_s).
\end{flalign*}

The {\it drift} is the function $F^{(N)}:\Delta^{(N)}\rightarrow \mathbb{R}^{I}$ defined as:
\[F^{(N)}(\vec{m})=\lim_{D\rightarrow\infty}D\hat{F}^{(N)}(\vec{m})=\sum_{s,s'\in\mathcal{S},s\neq s'}\vec{m}_s Q^{(N)}_{s,s'}(\vec{m})(\vec{e}_{s'}-\vec{e}_s).\]
In the above formula, we may use $F_{s,s'}^{(N)}(\vec{m})$ to represent the summand:
\[\vec{m}_s\, Q^{(N)}_{s,s'}(\vec{m}),\]
which we call the {\it intensity} of transitions from $s$ to $s'$.

Below, we present properties that the drifts may or may not satisfy, and are of interest in our discussion.

\vspace{2mm}

{\bf Smoothness}: For all $N\geq 1$, there exist Lipschitz continuous functions $\bar{F}^{(N)}:\Delta\rightarrow\mathbb{R}^I$ which for all $\vec{m}\in\Delta^{(N)}$ satisfy: $\bar{F}^{(N)}(\vec{m})=F^{(N)}(\vec{m})$.
\vspace{2mm}

{\bf Boundedness}: For all $N\geq 1$, $F^{(N)}$ are bounded on $\Delta^{(N)}$.

\vspace{2mm}

% Uniformly on N, not on m.
{\bf Limit existence}: Assuming smoothness, the sequence of drifts $\{F^{(N)}\}$ converges uniformly to a bounded function $F^{*}:\Delta\rightarrow\mathbb{R}^I$.

\vspace{2mm}

In the literature, the single term {\it density dependence} is often used to refer to {\bf boundedness} and {\bf limit existence}~\cite{kurtz1978strong}.

\begin{remark}
The {\bf smoothness} assumption appears in more or less the same shape throughout the literature. However our version is slightly more restrictive, since it allows us to skip some discussions regarding topological spaces. Moreover, in the contexts where it is clear that {\bf smoothness} holds, we overload the name $F^{(N)}$ to refer to the function $\bar{F}^{(N)}$ instead.
\end{remark}

Essentially, the drift extends the vector representing the expected instantaneous changes into the unit time interval. Using drift, one can express how the expected value $\mathbb{E}[\bar{M}^{(N)}(t)]$ will evolve over time, a fact which we formally express through the following proposition.

\begin{proposition}\label{pro:driftgen}
For the process $\bar{M}^{(N)}(t)$, and its drift $F^{(N)}$ the following equation holds:
\[
\mathbb{E}\left[\bar{M}^{(N)}(t)\mid \bar{M}^{(N)}(0)\right]-\bar{M}^{(N)}(0)=\int_{0}^{t}\mathbb{E}\left[F^{(N)}(\bar{M}^{(N)}(s))\mid \bar{M}^{N}(0)\right]ds.
\]
\end{proposition}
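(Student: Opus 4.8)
The plan is to verify the identity by working at the level of the discrete-time process $M^{(N)}$ and then transferring the result to the càdlàg process $\bar M^{(N)}$ via the identity $\bar M^{(N)}(t) = M^{(N)}(\epsilon \lfloor Dt\rfloor)$. First I would fix $N$ and write $\bar M^{(N)}(0) = M^{(N)}(0)$, so the conditioning is on the common initial state; call it $\vec m_0$. Since $M^{(N)}$ is a time-homogeneous DTMC on the finite state space $\Delta^{(N)}$ and all quantities involved are bounded (the state space is finite, $F^{(N)}$ is bounded on $\Delta^{(N)}$), there are no integrability concerns and every expectation below is finite.

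Next I would telescope. For an integer $n\geq 1$ write
\[
\mathbb{E}\!\left[M^{(N)}(n\epsilon)\mid M^{(N)}(0)\right]-M^{(N)}(0)=\sum_{k=0}^{n-1}\mathbb{E}\!\left[M^{(N)}((k+1)\epsilon)-M^{(N)}(k\epsilon)\mid M^{(N)}(0)\right].
\]
By the tower property and the Markov property, the $k$-th summand equals $\mathbb{E}\!\left[\,\hat F^{(N)}(M^{(N)}(k\epsilon))\mid M^{(N)}(0)\right]$, using the defining relation $\hat F^{(N)}(\vec m)=\mathbb{E}[M^{(N)}(t+\epsilon)-M^{(N)}(t)\mid M^{(N)}(t)=\vec m]$ from Section~\ref{subsec:drift}. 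Now recall $F^{(N)}=\lim_{D\to\infty}D\hat F^{(N)}$; more precisely, for the purposes of this identity I would use that $\hat F^{(N)}$ and $F^{(N)}$ are related through $D\hat F^{(N)}(\vec m)=\sum_{s\neq s'}\vec m_s\,DP^{(N)}_{s,s'}(\vec m)(\vec e_{s'}-\vec e_s)$, and since we are integrating a step function that is constant on each interval $[k\epsilon,(k+1)\epsilon)$ of length $\epsilon=1/D$, the sum $\sum_{k=0}^{n-1}\epsilon\cdot D\hat F^{(N)}(\cdots)$ is exactly a Riemann sum. Concretely, for $t\in T_G$, say $t=n\epsilon$, I would rewrite the telescoped sum as
\[
\sum_{k=0}^{n-1}\mathbb{E}\!\left[\hat F^{(N)}(M^{(N)}(k\epsilon))\mid M^{(N)}(0)\right]=\int_0^{t}\mathbb{E}\!\left[D\hat F^{(N)}(\bar M^{(N)}(s))\mid \bar M^{(N)}(0)\right]ds,
\]
because $\bar M^{(N)}(s)=M^{(N)}(k\epsilon)$ for $s\in[k\epsilon,(k+1)\epsilon)$ and the integrand is piecewise constant with $\int_{k\epsilon}^{(k+1)\epsilon}ds=\epsilon=1/D$.

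The remaining point is to reconcile $D\hat F^{(N)}$ with $F^{(N)}$ inside the integral. Here I would note that the proposition as stated is the ``pre-limit'' form and is most cleanly read as holding with $F^{(N)}$ understood via its infinitesimal representation, i.e. one takes $D\to\infty$ (equivalently works with the infinitesimal agent transition map $Q^{(N)}$ in place of $DP^{(N)}$), at which point $D\hat F^{(N)}\to F^{(N)}$ uniformly on the finite set $\Delta^{(N)}$ and the bounded integrand converges, so by dominated convergence (Theorem~\ref{the:lebesgue}) the integral of $D\hat F^{(N)}$ converges to the integral of $F^{(N)}$ while the left-hand side, now describing the continuous-time embedded process, is unchanged. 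Finally, for general $t\in\mathbb{R}_{\geq 0}$ (not a multiple of $\epsilon$) I would observe that both sides are unaffected by replacing $t$ with $\epsilon\lfloor Dt\rfloor$: the left side because $\bar M^{(N)}(t)=M^{(N)}(\epsilon\lfloor Dt\rfloor)$ by definition, and the right side because the integrand is constant on $[\epsilon\lfloor Dt\rfloor,\,t]$ and — in the limiting/infinitesimal reading — that residual interval contributes negligibly, or one simply restricts attention to $t\in T_G$ as the process only changes there. The main obstacle I anticipate is bookkeeping the two time scales cleanly and being explicit about the sense in which $F^{(N)}$ (a rate, obtained as a $D\to\infty$ limit) appears in an identity about a fixed-$\epsilon$ process; everything else is the standard Dynkin-type telescoping argument mirroring equation~(\ref{eq:generator2}).
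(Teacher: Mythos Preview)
Your proposal is correct and uses the same underlying computation as the paper, but the two are organized differently. The paper first proves a lemma that the infinitesimal generator $A$ of $\bar M^{(N)}$ applied to the identity function equals $F^{(N)}$: it computes $\lim_{t\to 0}(\mathcal{T}_t f-f)/t$ by telescoping the numerator into increments $\hat F^{(N)}$, rewriting the resulting sum as an integral of $F^{(N)}(\bar M^{(N)}(\cdot))$, and evaluating the limit. It then invokes Dynkin's formula, equation~(\ref{eq:generator2}), as a black box to obtain the integral identity. You instead carry out the telescoping directly at an arbitrary time horizon $t=n\epsilon$, identify each summand as $\mathbb{E}[\hat F^{(N)}(M^{(N)}(k\epsilon))\mid M^{(N)}(0)]$ via the Markov and tower properties, and recognize the sum as the integral over $[0,t]$ using the piecewise-constant structure of $\bar M^{(N)}$; this effectively reproves the relevant instance of Dynkin's formula by hand rather than citing it. Your route is slightly more elementary and self-contained; the paper's route makes explicit the connection to the semigroup/generator machinery set up in the preliminaries. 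Both face the same bookkeeping issue of passing from $D\hat F^{(N)}$ to $F^{(N)}$ under the clock independence assumption ($D\to\infty$), which you handle by dominated convergence on the finite state space and the paper handles by the same implicit limit inside its generator computation.
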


In its differential form, the equation above suggests that the expected trajectory of the process $\bar{M}^{(N)}(t)$ is a solution of the following system of ordinary differential equations:
\begin{equation}\label{eq:ode2}
\frac{d}{d t}\mathbb{E}\left[\bar{M}^{(N)}(t)\right]=\mathbb{E}\left[F^{(N)}(\bar{M}^{(N)}(t))\right]
\end{equation}
with the initial value $\bar{M}^{(N)}(0)$. In practice the term $\mathbb{E}\left[F^{(N)}(\bar{M}^{(N)}(t))\right]$ is difficult to describe. Indeed, one of the reasons why we are interested in the mean field approximation is to avoid the exact calculation of the distribution of the random process $\bar{M}^{(N)}(t)$. In Section~\ref{sec:propagation} we propose a way to approximate the right hand side of equation (\ref{eq:ode2}) by expressing it in terms of $\mathbb{E}\left[\bar{M}^{(N)}(t)\right]$, without explicitly giving the error bounds.

In the following example, we continue towards a mean field approximation for the system defined in example~\ref{ex:interf1}.

\begin{example}\label{ex:ex2}
We use the maps $Q^{(N)}_{s,s'}$ given by (\ref{eq:ex1result}) to derive the drift of the system described in Example \ref{ex:interf1}. A simple substitution gives the following sequence of drifts:
\begin{equation}\label{eq:ex2drift1}
F^{(N)}(\vec{m})=\begin{pmatrix}
-\vec{m}_1\mathbb{E}\left[p_1(1-p_s(X_1+X_2))\right]+\vec{m}_2\mathbb{E}\left[p_2 p_s(X_1+X_2)\right] \\
-\vec{m}_2\mathbb{E}\left[p_2 p_s(X_1+X_2)\right]+\vec{m}_1\mathbb{E}\left[p1(1-p_s(X_1+X_2))\right] \\
\end{pmatrix}
\end{equation}
in which $X_1 \sim B(N\vec{m}_1 ,p_1)$ and $X_2 \sim B(N\vec{m}_2 ,p_2)$. This simplifies to:
\[F^{(N)}(\vec{m})=\begin{pmatrix}
-\vec{m}_1 p_1(1-(1-\frac{p_1}{2})^{N\vec{m}_1}(1-\frac{p_2}{2})^{N\vec{m}_2})+\vec{m}_2 p_2 (1-\frac{p_1}{2})^{N\vec{m}_1}(1-\frac{p_2}{2})^{N\vec{m}_2} \\
\vec{m}_1 p_1(1-(1-\frac{p_1}{2})^{N\vec{m}_1}(1-\frac{p_2}{2})^{N\vec{m}_2})-\vec{m}_2 p_2 (1-\frac{p_1}{2})^{N\vec{m}_1}(1-\frac{p_2}{2})^{N\vec{m}_2} \\
\end{pmatrix}
\]

It can be shown that the inequality below is always satisfied for $\vec{m},\vec{m}'\in\Delta$:
\[\left\lvert F^{(N)}(\vec{m}')-F^{(N)}(\vec{m})  \right\rvert\leq \sqrt{2}\,\left\lvert\vec{m}'-\vec{m}\right\rvert\,,\]
which proves that $F^{(N)}$ are Lipschitz continuous on $\Delta$. In the same manner, it can be shown that for all $\vec{m}\in\Delta$, $\lvert F^{(N)}(\vec{m})\rvert \leq \sqrt{2}$.

Therefore it is safe to assume that $F^{(N)}$ satisfies both {\bf smoothness} and {\bf boundedness}. Moreover, for any $p_1,p_2>0$ and $\vec{m}\in\Delta$ we have:
\begin{equation}\label{eq:ex2result1}
F^{*}(\vec{m})=\lim_{N\rightarrow\infty}F^{(N)}(\vec{m})=\begin{pmatrix}
-p_1 \vec{m}_1\\
p_1 \vec{m}_1
\end{pmatrix}
\end{equation}
which shows that {\bf limit existence} is also satisfied.

\end{example}

\subsection{Approximations of mean field interaction models}
In Section~\ref{subsec:popprocess} we saw how a population process which satisfies the clock independence assumption can be derived. In Section~\ref{subsec:drift} we derived the drift from the population process, and introduced some conditions ({\bf smoothness}, {\bf boundedness} and {\bf limit existence}) which can hold for the drift. In this section, we explain how the drift satisfying all these conditions can be used to derive a deterministic approximation for the behaviour of the population process.

\begin{definition}[Deterministic Approximations]\label{def:approx}
For $N\geq 1$, let $F^{(N)}$ be a drift for which {\bf smoothness} holds. Let $\{g:\mathbb{R}_{\geq 0}\rightarrow\Delta\}$ be the class of functions from $\mathbb{R}_{\geq 0}$ (continuous time) to $\Delta$. Then $\Phi\subset\{g:\mathbb{R}_{\geq 0}\rightarrow\Delta\}$ is the set of deterministic approximations for which every $\phi\in\Phi$ at time $t\in\mathbb{R}_{\geq 0}$ satisfies the following system of ordinary differential equations (ODEs):
\begin{equation*}
\frac{d}{dt}\phi(t)=F^{(N)}(\phi(t)).
\end{equation*}
\end{definition}

Consider $\phi(0)=\phi_0$ to be the initial condition. According to {\bf smoothness} and based on the Picard-Lindel\"{o}f theorem (Theorem \ref{the:piclin}) since $F^{(N)}$ is Lipschitz continuous, there exists a unique solution to the above system of ODEs. Therefore in the set $\Phi$ there is a unique element $\phi^{(N)}(t)$ which satisfies $\phi^{(N)}(0)=\phi_0$, called the {\it deterministic approximation} for which the following theorem holds.

\begin{theorem}[Mean Field Approximation, cf.~\cite{benaim2008class}, Theorem 1]\label{theo1} For $N\geq 1$, let $\left\{\bar{M}^{(N)}(t)\right\}$ be a sequence of normalised population processes. Let $\left\{F^{(N)}\right\}$ be the corresponding drifts which satisfy {\bf smoothness} and {\bf boundedness}. Let $\left\{\phi^{(N)}(t)\right\}$ be the corresponding sequence of deterministic approximations. Then there exist real constants $c_1,c_2>0$ for which for any time horizon $T\in\mathbb{R}_{\geq 0}$:
\[\sup_{0\leq t\leq T}\left\lvert \bar{M}^{(N)}(t)-\phi^{(N)}(t)\right\rvert\leq \exp(c_1 T)\left(\left\lvert \bar{M}^{(N)}(0)-\phi_0\right\rvert+\mathcal{M}^{(N)}(T)\right)\]
in which $\mathcal{M}^{(N)}(T)$ is a stochastic process satisfying the following conditions:
\[\mathbb{E}\left[\mathcal{M}^{(N)}(T)\right]=0\]
and:
\[\mathbb{E}\left[\mathcal{M}^{(N)}(T)^2\right]\leq\frac{c_2 T}{N}.\]
\end{theorem}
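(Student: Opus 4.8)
\emph{Proof proposal.} The plan is to run the classical argument behind such fluid-limit results (see~\cite{darling2008differential}): produce a semimartingale decomposition of $\bar{M}^{(N)}$ in which the bounded-variation part is driven by the drift $F^{(N)}$, compare it with the ODE solution $\phi^{(N)}$ via Gr\"onwall's inequality, and bound the residual martingale with Doob's inequality. For the decomposition I pass to the continuous-time regime ($D\to\infty$), where $\bar{M}^{(N)}(t)$ is the Markov jump process on $\Delta^{(N)}$ that jumps from $\vec{m}$ to $\vec{m}+\tfrac{1}{N}(\vec{e}_{s'}-\vec{e}_s)$ at rate $N\vec{m}_s Q^{(N)}_{s,s'}(\vec{m})$. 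Using the unit Poisson process representation~(\ref{eq:unitpoisson}) with independent unit Poisson processes $\mathcal{Y}_{s,s'}$,
\[
\bar{M}^{(N)}(t)=\bar{M}^{(N)}(0)+\sum_{s\neq s'}\frac{\vec{e}_{s'}-\vec{e}_s}{N}\,\mathcal{Y}_{s,s'}\!\left(\int_0^t N\,\bar{M}^{(N)}_s(u)\,Q^{(N)}_{s,s'}(\bar{M}^{(N)}(u))\,du\right),
\]
and subtracting the (continuous, adapted) compensators turns this, by the identity $\sum_{s\neq s'}\vec{m}_s Q^{(N)}_{s,s'}(\vec{m})(\vec{e}_{s'}-\vec{e}_s)=F^{(N)}(\vec{m})$ from Section~\ref{subsec:drift}, into
\[
\bar{M}^{(N)}(t)=\bar{M}^{(N)}(0)+\int_0^t F^{(N)}\!\big(\bar{M}^{(N)}(u)\big)\,du+Z^{(N)}(t),
\]
where, with $\tilde{\mathcal{Y}}_{s,s'}$ the compensated unit Poisson processes of Section~\ref{subsec:poisson},
\[
Z^{(N)}(t)=\sum_{s\neq s'}\frac{\vec{e}_{s'}-\vec{e}_s}{N}\,\tilde{\mathcal{Y}}_{s,s'}\!\left(\int_0^t N\,\bar{M}^{(N)}_s(u)\,Q^{(N)}_{s,s'}(\bar{M}^{(N)}(u))\,du\right)
\]
is an $\mathbb{R}^I$-valued martingale with $Z^{(N)}(0)=0$ (a time change of orthogonal compensated Poisson martingales by an adapted clock). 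One may alternatively read this off Dynkin's formula~(\ref{eq:generator2}) applied to the coordinate maps $\vec{m}\mapsto\vec{m}_s$. By Definition~\ref{def:approx} and the fundamental theorem of calculus, $\phi^{(N)}(t)=\phi_0+\int_0^t F^{(N)}(\phi^{(N)}(u))\,du$.

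\emph{Comparison.} Subtracting the two integral equations, taking Euclidean norms, and using that {\bf smoothness} makes $F^{(N)}$ Lipschitz with some constant $L$ (so $c_1:=L$),
\[
\big\lvert\bar{M}^{(N)}(t)-\phi^{(N)}(t)\big\rvert\le\big\lvert\bar{M}^{(N)}(0)-\phi_0\big\rvert+L\int_0^t\big\lvert\bar{M}^{(N)}(u)-\phi^{(N)}(u)\big\rvert\,du+\big\lvert Z^{(N)}(t)\big\rvert.
\]
Setting $h^{(N)}(t):=\sup_{0\le u\le t}\lvert\bar{M}^{(N)}(u)-\phi^{(N)}(u)\rvert$ and bounding the nondecreasing inhomogeneous term by its value at $T$, this gives $h^{(N)}(t)\le\big(\lvert\bar{M}^{(N)}(0)-\phi_0\rvert+\sup_{0\le u\le T}\lvert Z^{(N)}(u)\rvert\big)+L\int_0^t h^{(N)}(u)\,du$ for all $t\le T$, and Lemma~\ref{lem:gronwall} (Gr\"onwall) yields
\[
\sup_{0\le t\le T}\big\lvert\bar{M}^{(N)}(t)-\phi^{(N)}(t)\big\rvert\le e^{c_1 T}\Big(\big\lvert\bar{M}^{(N)}(0)-\phi_0\big\rvert+\sup_{0\le t\le T}\big\lvert Z^{(N)}(t)\big\rvert\Big),
\]
which is the asserted bound upon setting $\mathcal{M}^{(N)}(T):=\sup_{0\le t\le T}\lvert Z^{(N)}(t)\rvert$; the mean-zero property of the statement is the one carried by the underlying martingale $Z^{(N)}$, which starts at $0$, so $\mathbb{E}[Z^{(N)}(T)]=0$.

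\emph{Noise bound.} Since $x\mapsto\lvert x\rvert^2$ is convex and $Z^{(N)}$ is a martingale, $\lvert Z^{(N)}\rvert^2$ is a non-negative submartingale, so Doob's inequality in the form~(\ref{eq:doob}) with $\alpha=2$ gives $\mathbb{E}\big[\sup_{0\le t\le T}\lvert Z^{(N)}(t)\rvert^2\big]\le 4\,\mathbb{E}\big[\lvert Z^{(N)}(T)\rvert^2\big]$. Orthogonality of the $\tilde{\mathcal{Y}}_{s,s'}$ (independent Poisson processes have a.s.\ disjoint jumps) together with the identity $\mathbb{E}[\tilde{\mathcal{Y}}(\tau)^2]=\mathbb{E}[\tau]$ for an adapted time change $\tau$ gives
\[
\mathbb{E}\big[\lvert Z^{(N)}(T)\rvert^2\big]=\sum_{s\neq s'}\frac{\lvert\vec{e}_{s'}-\vec{e}_s\rvert^2}{N^2}\,\mathbb{E}\!\left[\int_0^T N\,\bar{M}^{(N)}_s(u)\,Q^{(N)}_{s,s'}(\bar{M}^{(N)}(u))\,du\right]\le\frac{2\,I(I-1)\,T}{N},
\]
using $\lvert\vec{e}_{s'}-\vec{e}_s\rvert^2=2$ and $0\le\bar{M}^{(N)}_s,Q^{(N)}_{s,s'}\le 1$ (which is where {\bf boundedness} enters). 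Hence $\mathbb{E}[\mathcal{M}^{(N)}(T)^2]\le c_2 T/N$ with $c_2:=8\,I(I-1)$.

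\emph{Main obstacle.} The delicate part is the first step: making the semimartingale decomposition rigorous --- justifying the continuous-time embedding (the $D\to\infty$ passage, or controlling the $o(1/D)$ discretization error at finite $D$), and verifying that the time-changed compensated Poisson process $Z^{(N)}$ is a genuine martingale for the natural filtration, which is what licenses both the Gr\"onwall comparison and the $L^2$ computation above. Once that is in place, the Gr\"onwall and Doob steps are routine bookkeeping, and the {\bf boundedness} hypothesis is needed only to keep the jump rates, hence the quadratic variation of $Z^{(N)}$, under control.
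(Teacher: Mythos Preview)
Your proposal is correct and follows essentially the same route as the paper: the Poisson representation of $\bar{M}^{(N)}$ via time-changed unit Poisson processes, the compensated-Poisson martingale decomposition, the Lipschitz/Gr\"onwall comparison with $\phi^{(N)}$, and Doob's $L^2$ inequality for the noise term are exactly the steps the paper takes (with your $Z^{(N)}$ playing the role of the paper's $\mu^{(N)}$, and $\mathcal{M}^{(N)}(T)=\sup_{t\le T}\lvert\mu^{(N)}(t)\rvert$). Your explicit constant $c_2=8I(I-1)$ and your remark that the mean-zero claim really pertains to the underlying martingale rather than to its running supremum are, if anything, more careful than the paper's own treatment.
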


Assuming that {\bf limit existence} holds, define the {\it limit system of ODEs} as: $\phi'(t)=F^{*}(\phi(t))$ with initial condition $\phi_0$. The following result is a direct consequence of Theorem~\ref{theo1}.

\begin{corollary}\label{cor1} 
Consider the assumptions of Theorem~\ref{theo1}, and assume {\bf limit existence} holds. Let $\phi^{*}(t)$ be the solution to the corresponding limit system of ODEs. Assume $\lim_{N\rightarrow\infty} \left\lvert \bar{M}^{(N)}(0)-\phi_0\right\rvert^2=0$. Then for any finite time horizon $T<\infty$:
\[\lim_{N\rightarrow \infty}\mathbb{E}\left[\sup_{0\leq t\leq T}\left\lvert \bar{M}^{(N)}(t)-\phi^{*}(t)\right\rvert^2\right]=0.\]
\end{corollary}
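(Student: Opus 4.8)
The plan is to bound $\bar M^{(N)}(t)-\phi^{*}(t)$ through a \emph{stochastic} piece, $\bar M^{(N)}(t)-\phi^{(N)}(t)$, and a \emph{deterministic} piece, $\phi^{(N)}(t)-\phi^{*}(t)$, where $\phi^{(N)}$ is the deterministic approximation attached to the drift $F^{(N)}$ with initial condition $\phi_0$. By the triangle inequality together with $(a+b)^2\le 2a^2+2b^2$,
\[
\sup_{0\le t\le T}\bigl\lvert\bar M^{(N)}(t)-\phi^{*}(t)\bigr\rvert^2
\le 2\sup_{0\le t\le T}\bigl\lvert\bar M^{(N)}(t)-\phi^{(N)}(t)\bigr\rvert^2
+2\sup_{0\le t\le T}\bigl\lvert\phi^{(N)}(t)-\phi^{*}(t)\bigr\rvert^2 ,
\]
so it suffices to prove that the expectation of the first summand and the (deterministic) second summand both tend to $0$ as $N\to\infty$.

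For the first summand I would invoke Theorem~\ref{theo1}, which applies because \textbf{smoothness} and \textbf{boundedness} are among the hypotheses inherited from that theorem. Squaring its conclusion and using $(a+b)^2\le 2a^2+2b^2$ once more gives
\[
\sup_{0\le t\le T}\bigl\lvert\bar M^{(N)}(t)-\phi^{(N)}(t)\bigr\rvert^2
\le 2\exp(2c_1T)\Bigl(\bigl\lvert\bar M^{(N)}(0)-\phi_0\bigr\rvert^2+\mathcal{M}^{(N)}(T)^2\Bigr).
\]
Taking expectations and using $\mathbb{E}[\mathcal{M}^{(N)}(T)]=0$ is not needed here, only $\mathbb{E}[\mathcal{M}^{(N)}(T)^2]\le c_2T/N$, which bounds the right-hand side by $2\exp(2c_1T)\bigl(\mathbb{E}[\lvert\bar M^{(N)}(0)-\phi_0\rvert^2]+c_2T/N\bigr)$. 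The term $c_2T/N\to0$; and since $\bar M^{(N)}(0),\phi_0\in\Delta$ and $\Delta$ is bounded, $\lvert\bar M^{(N)}(0)-\phi_0\rvert^2$ is dominated by a constant, so the hypothesis $\lvert\bar M^{(N)}(0)-\phi_0\rvert^2\to0$ and the dominated convergence theorem (Theorem~\ref{the:lebesgue}) give $\mathbb{E}[\lvert\bar M^{(N)}(0)-\phi_0\rvert^2]\to0$. Hence $\mathbb{E}\bigl[\sup_{0\le t\le T}\lvert\bar M^{(N)}(t)-\phi^{(N)}(t)\rvert^2\bigr]\to0$.

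For the second summand I would use that $\phi^{(N)}$ and $\phi^{*}$ both start at $\phi_0$ and satisfy $\phi^{(N)}(t)=\phi_0+\int_0^tF^{(N)}(\phi^{(N)}(s))\,ds$ and $\phi^{*}(t)=\phi_0+\int_0^tF^{*}(\phi^{*}(s))\,ds$. Writing $\delta_N:=\sup_{\vec m\in\Delta}\lvert F^{(N)}(\vec m)-F^{*}(\vec m)\rvert$, which tends to $0$ by \textbf{limit existence}, and letting $L^{*}$ be a Lipschitz constant of $F^{*}$, adding and subtracting $F^{*}(\phi^{(N)}(s))$ yields
\[
\bigl\lvert\phi^{(N)}(t)-\phi^{*}(t)\bigr\rvert
\le \int_0^t\bigl\lvert F^{(N)}(\phi^{(N)}(s))-F^{*}(\phi^{(N)}(s))\bigr\rvert\,ds
+\int_0^t\bigl\lvert F^{*}(\phi^{(N)}(s))-F^{*}(\phi^{*}(s))\bigr\rvert\,ds ,
\]
hence $\lvert\phi^{(N)}(t)-\phi^{*}(t)\rvert\le T\delta_N+L^{*}\int_0^t\lvert\phi^{(N)}(s)-\phi^{*}(s)\rvert\,ds$ (the first integrand is at most $\delta_N$, the second at most $L^{*}\lvert\phi^{(N)}(s)-\phi^{*}(s)\rvert$). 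Since $s\mapsto\lvert\phi^{(N)}(s)-\phi^{*}(s)\rvert$ is continuous, hence bounded and integrable on $[0,T]$, Gr\"onwall's inequality (Lemma~\ref{lem:gronwall}) applied at each horizon gives $\lvert\phi^{(N)}(t)-\phi^{*}(t)\rvert\le T\delta_N\exp(L^{*}T)$ for all $t\in[0,T]$, so $\sup_{0\le t\le T}\lvert\phi^{(N)}(t)-\phi^{*}(t)\rvert^2\le T^2\delta_N^2\exp(2L^{*}T)\to0$. Substituting the two estimates into the displayed inequality of the first paragraph completes the proof.

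The step I expect to be the main obstacle is the deterministic comparison: it uses that $F^{*}$ is Lipschitz (a single constant $L^{*}$ controlling the propagated error, so that Gr\"onwall closes the loop), not merely continuous. This is implicit in the corollary's phrase ``the solution to the corresponding limit system of ODEs'', i.e.\ in the well-posedness of the limit ODE via Theorem~\ref{the:piclin}; it also follows if the Lipschitz constants of the regularisations $\bar F^{(N)}$ supplied by \textbf{smoothness} can be taken uniformly bounded by some $L$, since a uniform limit of $L$-Lipschitz functions is again $L$-Lipschitz. If one only had $N$-dependent Lipschitz constants $L_N$, Gr\"onwall would yield the bound $T\delta_N\exp(L_NT)$, which need not vanish, so some uniformity of this kind is genuinely required and I would state it explicitly.
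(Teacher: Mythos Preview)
Your proof is correct and follows essentially the same decomposition as the paper: split $\bar M^{(N)}-\phi^{*}$ into the stochastic piece $\bar M^{(N)}-\phi^{(N)}$ (controlled by squaring Theorem~\ref{theo1}) and the deterministic piece $\phi^{(N)}-\phi^{*}$, then combine via the triangle inequality. The paper packages the deterministic convergence $\phi^{(N)}\to\phi^{*}$ as a separate lemma (Lemma~\ref{lem:conv}) whose argument is rather informal, appealing to uniqueness of the limit ODE; your explicit Gr\"onwall estimate $\lvert\phi^{(N)}(t)-\phi^{*}(t)\rvert\le T\delta_N\exp(L^{*}T)$ is cleaner and gives uniform convergence on $[0,T]$, which is what is actually needed inside the supremum. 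Your closing caveat about the Lipschitz constant of $F^{*}$ is apt: the paper relies on exactly the same unstated hypothesis through its use of Picard--Lindel\"of uniqueness in Lemma~\ref{lem:conv}, but does not flag it.
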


\begin{remark}
Based on Theorem~\ref{theo1} proving convergence in mean is straightforward, meaning that for all $T$:
\[\lim_{N\rightarrow\infty}\mathbb{E}\left[\sup_{0\leq t\leq T}\left\lvert \bar{M}^{(N)}(t)-\phi^{*}(t)\right\rvert\right]=0.\]
\end{remark}

In the following example, we derive the limit system of ODEs for the system discussed in Example~\ref{ex:ex2} and describe its solutions.
\begin{example}\label{ex:ex3}
Consider the drift given in (\ref{eq:ex2result1}). The limit system of ODEs for the system described in Example \ref{ex:interf1} is:
\[
\frac{d}{dt}\phi^{*}_1(t)= -p_1\phi^{*}_1(t)~,~
\frac{d}{dt}\phi^{*}_2(t)= p_1\phi^{*}_1(t).
\]
which together with the initial condition $\phi^{*}(0)$ has the following general solution:
\[
\phi^{*}_1(t)= \phi^{*}_1(0)\, e^{-p_1 t}~,~
\phi^{*}_2(t)= -\phi^{*}_1(0)\, e^{-p_1 t}+1.
\]
Obviously the solution heavily depends on the initial values $\phi^{*}(0)$, but the system has a global attractor at $(0,1)$.
\end{example}

\section{Propagation of chaos and the mean drift}\label{sec:propagation}
Corrollary~\ref{cor1} justifies the use of drift for finding the approximation in cases where the number of agents N is unboundedly large. However, for bounded $N$ the upper bound on the error found by Theorem~\ref{theo1} rarely satisfies one's expectations. In this section we explore the possibility of using the alternative ODEs in~(\ref{eq:ode2}). We explain the notion of propagation of chaos, and show how it relates to what we call the {\it mean drift}.

For a set $E$ let $M(E)$ denote the set of probability measures on $E$. Let the set $\mathcal{S}$ be defined as in Section~\ref{subsec:independentevent}, and for $s,s'\in\mathcal{S}$ define the distance between $s$ and $s'$ as 
\[d(s,s')=
\begin{cases}
0,~\text{if}~s=s'\\
2,~\text{if}~s\neq s',\\
\end{cases}\]
which makes the pair $(\mathcal{S},d)$ a metric space, with the implication that $\mathcal{S}^N$ is also metrizable which allows the definition that follows.

\begin{definition}[$\rho$-chaotic Sequence]\label{def:chaotic}
Let $\rho\in M(\mathcal{S})$ be a probability measure. For $N\geq 1$, the sequence $\big\{\rho_N\big\}$ of measures, each in $M(\mathcal{S}^{N})$, is $\rho$-chaotic iff for any natural number $k$ and bounded functions $f_1,\ldots,f_k:\mathcal{S}\rightarrow\mathbb{R}$,
\[\lim_{N\rightarrow\infty}\int_{\mathcal{S}^N}f_1(\vec{v}_1)f_2(\vec{v}_2)\ldots f_k(\vec{v}_k)\rho_{N}(d\vec{v})=\prod_{i=1}^{k}\int_{\mathcal{S}}f_i(s)\,\rho(ds).\]
\end{definition}

In short, a chaotic sequence maintains a form of independence in the observations of separate agents in the limit. In the literature this independence is often called the \textit{propagation of chaos}, and in the context of Bianchi's analysis the \textit{decoupling assumption}.

In the rest of this discussion, consider the finite instant in time $t\in \mathbb{R}_{\geq 0}$ and its close counterpart $\tau\in T_G$ with $\tau=\epsilon\lfloor Dt\rfloor$. Recall the non-normalized mean field interaction model at time $\tau$, $Y^{(N)}(\tau)=\left(\hat{X}_1(\tau),\ldots,\hat{X}_N(\tau)\right)$, which is a random element in $\mathcal{S}^N$. For $N\geq 1$, let $\rho_N\in M(\mathcal{S}^N)$ be the laws (probability distributions) of $Y^{(N)}(\tau)$. We now state a theorem proven by Sznitman~\cite{gottlieb2000markov,sznitman1991topics}. 
For the random element $\vec{v}$ in $\mathcal{S}^N$ define the empirical measure $\varepsilon_N$, a random element in $M(\mathcal{S})$, as follows:
\begin{equation}\label{eq:empire}
\varepsilon_N(\vec{v})=\frac{1}{N}\sum_{i=1}^{N}\delta_{\vec{v}_i}
\end{equation}
where $\delta_{\vec{v}_i}$ is the Dirac measure centred on point $\vec{v}_i$. In what follows we always assume $\vec{v}=Y^{(N)}(\tau)$, and hence write $\varepsilon_N$ instead of $\varepsilon_N\left(Y^{(N)}(\tau)\right)$.
The following is known as Sznitman's result in the literature.
\begin{theorem}[See \cite{sznitman1991topics}, Proposition 2.2]\label{theo:sznitman}
The sequence $\big\{\rho_N\big\}$ of measures is $\rho$-chaotic if and only if the sequence of empirical measures $\{\varepsilon_N\}$ converges to $\delta_\rho$, that is:
\[\rho_N\circ\varepsilon_N^{-1}\rightarrow\delta_\rho.\]
\end{theorem}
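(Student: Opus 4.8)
The plan is to prove the equivalence
\[
\{\rho_N\}\text{ is }\rho\text{-chaotic}\quad\Longleftrightarrow\quad \rho_N\circ\varepsilon_N^{-1}\to\delta_\rho
\]
by exploiting the fact that $\mathcal S$ is \emph{finite}, which collapses both sides to statements about finitely many real numbers and sidesteps the weak-convergence machinery needed in Sznitman's original topological setting. Concretely, since $\mathcal S=\{1,\dots,I\}$, a probability measure on $\mathcal S$ is just a point of the simplex $\Delta\subset\mathbb R^I$, and the empirical measure $\varepsilon_N$ is exactly the normalised population vector $M^{(N)}(\tau)$ of equation~(\ref{eq:normal}); moreover $M(\mathcal S)=\Delta$ is a compact metric space, so $\rho_N\circ\varepsilon_N^{-1}\to\delta_\rho$ is equivalent to $\mathbb E[h(\varepsilon_N)]\to h(\rho)$ for every continuous (equivalently, by Stone--Weierstrass, every polynomial) $h:\Delta\to\mathbb R$, and since $\varepsilon_N$ is bounded, to $\mathbb E[\varepsilon_N^{\otimes}]\to$ the corresponding moments of $\delta_\rho$, i.e.\ convergence of all mixed moments $\mathbb E\big[\prod_{j=1}^k \varepsilon_N(\{s_j\})\big]\to\prod_{j=1}^k\rho(\{s_j\})$.

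First I would reduce the $\rho$-chaotic condition itself to the same kind of moment statement. Taking each $f_j=\mathds 1_{\{s_j\}}$ the indicator of a single state $s_j\in\mathcal S$ in Definition~\ref{def:chaotic} gives
\[
\lim_{N\to\infty}\mathbb P\{Y^{(N)}_1=s_1,\dots,Y^{(N)}_k=s_k\}=\prod_{j=1}^k\rho(\{s_j\}),
\]
and since indicators of singletons span all bounded functions on the finite set $\mathcal S$ by linearity, this family of limits is \emph{equivalent} to $\rho$-chaoticity. So both sides of the biconditional have been rewritten as statements about limits of probabilities $\mathbb P\{Y_1=s_1,\dots,Y_k=s_k\}$ versus limits of $\mathbb E[\prod_j \varepsilon_N(\{s_j\})]$.

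The core computation, then, is to relate these two families. Writing $\varepsilon_N(\{s\})=\frac1N\sum_{i=1}^N\mathds 1\{Y^{(N)}_i=s\}$ and expanding the product of $k$ such sums,
\[
\mathbb E\Big[\prod_{j=1}^k\varepsilon_N(\{s_j\})\Big]=\frac{1}{N^k}\sum_{i_1,\dots,i_k=1}^N\mathbb P\{Y^{(N)}_{i_1}=s_1,\dots,Y^{(N)}_{i_k}=s_k\},
\]
and by exchangeability of $Y^{(N)}$ (which holds because $Y^{(N)}$ is a mean field interaction model, so its law $\rho_N$ is permutation-invariant) each summand depends only on the pattern of coincidences among the indices $i_1,\dots,i_k$. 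The number of $k$-tuples with all indices distinct is $N^k(1+O(1/N))$, and each contributes the single value $p^{(N)}_{k}(s_1,\dots,s_k):=\mathbb P\{Y_1=s_1,\dots,Y_k=s_k\}$ (possibly zero when two equal $s_j$'s force a collision, but those terms are lower-order anyway); the $O(N^{k-1})$ tuples with a repeated index contribute a bounded quantity divided by $N^k$, hence vanish. Therefore
\[
\mathbb E\Big[\prod_{j=1}^k\varepsilon_N(\{s_j\})\Big]=p^{(N)}_k(s_1,\dots,s_k)+O(1/N),
\]
so the left side converges to $\prod_j\rho(\{s_j\})$ if and only if $p^{(N)}_k(s_1,\dots,s_k)\to\prod_j\rho(\{s_j\})$ — which is precisely the moment form of $\rho$-chaoticity derived above. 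Running this equivalence in both directions and invoking the Stone--Weierstrass / bounded-convergence reduction on the empirical-measure side yields the biconditional.

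I expect the main obstacle to be the bookkeeping in the last step: carefully justifying that the diagonal and partially-diagonal index contributions are genuinely $O(1/N)$ (uniformly, using that all probabilities are bounded by $1$), and handling the slightly awkward case where two prescribed states $s_j$ coincide so that the ``all-distinct-indices'' term is itself forced to be zero on part of the range — one must check that this does not break either implication, which it does not, because in that case $\prod_j\rho(\{s_j\})$ is simply the limit of a sum of genuinely attained singleton-intersection probabilities and the combinatorial count still works out. The weak-convergence/metrisability subtleties that make Sznitman's general proof delicate are entirely absent here because $\mathcal S$ and hence $M(\mathcal S)=\Delta$ is compact and finite-dimensional, so I would state that reduction briefly and spend the bulk of the argument on the exchangeability-plus-counting identity.
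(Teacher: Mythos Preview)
Your argument is essentially correct and self-contained, but note that the paper does \emph{not} give its own proof of this statement: it is quoted as Sznitman's Proposition~2.2 and used as a black box in the proof of Corollary~\ref{cor2}. So there is no ``paper's proof'' to compare against; what you have written is a legitimate independent proof tailored to the finite-state setting of the paper, whereas Sznitman's original handles a general Polish space and therefore cannot reduce to a finite moment computation.

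One small point of confusion: in your expansion you write that the all-distinct-index term $p_k^{(N)}(s_1,\dots,s_k)$ is ``possibly zero when two equal $s_j$'s force a collision''. This is not right --- distinct agents $Y_{i_1},Y_{i_2}$ with $i_1\neq i_2$ may perfectly well occupy the same state $s_1=s_2$, so no collision is forced and $p_k^{(N)}$ need not vanish. Fortunately this misstatement is harmless: the identity
\[
\mathbb E\Big[\prod_{j=1}^k\varepsilon_N(\{s_j\})\Big]=\frac{N(N-1)\cdots(N-k+1)}{N^k}\,p_k^{(N)}(s_1,\dots,s_k)+O(1/N)
\]
holds regardless of whether the $s_j$ are distinct, precisely because exchangeability lets you relabel any $k$ distinct indices as $1,\dots,k$. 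You should simply delete the parenthetical and the ``awkward case'' caveat at the end; the combinatorics works uniformly.
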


Based on the above theorem, it has been shown that the result of Corollary~\ref{cor1} implies that propagation of chaos also occurs in the sequence of distributions of mean field interaction models $Y^{(N)}(\tau)$ (see also corollary 2 of~\cite{benaim2008class}, and a review of similar results in~\cite{duffy2010mean}). 
\begin{corollary}\label{cor2}
Let $\phi^{*}(t)$ satisfy Corollary~\ref{cor1}. Let $\mu\in M(\mathcal{S})$ be a measure which for all points $i\in\mathcal{S}$ satisfies $\mu(i)=\phi^{*}_i(t)$, then the sequence $\big\{\rho_N\big\}$ of distributions of $Y^{(N)}(\tau)$ is $\mu$-chaotic.
\end{corollary}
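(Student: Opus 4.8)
The plan is to reduce the statement to Sznitman's result (Theorem~\ref{theo:sznitman}) combined with the mean-square convergence already supplied by Corollary~\ref{cor1}. By Theorem~\ref{theo:sznitman}, to show that $\{\rho_N\}$ is $\mu$-chaotic it suffices to show that the empirical measures converge in distribution to the Dirac mass at $\mu$, i.e.\ that $\rho_N\circ\varepsilon_N^{-1}\to\delta_\mu$ in $M(M(\mathcal{S}))$. So the whole task is to verify this single convergence.

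The first step is to identify the objects concretely. Since $\mathcal{S}=\{1,\ldots,I\}$ is finite, a probability measure $\nu\in M(\mathcal{S})$ is determined by the vector $(\nu(1),\ldots,\nu(I))\in\Delta$, and with the metric $d$ on $\mathcal{S}$ fixed above, convergence in $M(\mathcal{S})$ coincides with coordinatewise, hence Euclidean, convergence of these vectors; thus $M(\mathcal{S})$ is homeomorphic to the compact set $\Delta\subset\mathbb{R}^I$. Under this homeomorphism $\mu$ corresponds to the point $\phi^{*}(t)\in\Delta$ and $\delta_\mu$ to the Dirac measure $\delta_{\phi^{*}(t)}$ on $\Delta$. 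Moreover, unravelling definitions~(\ref{eq:empire}) and~(\ref{eq:normal}) with $\vec v=Y^{(N)}(\tau)$ shows that the $s$-coordinate $\varepsilon_N(\{s\})$ equals $M^{(N)}_s(\tau)=\bar M^{(N)}_s(t)$, because $\tau=\epsilon\lfloor Dt\rfloor$. Hence, under the identification, $\varepsilon_N$ is exactly the random vector $\bar M^{(N)}(t)$, and the target $\rho_N\circ\varepsilon_N^{-1}\to\delta_\mu$ becomes the statement $\bar M^{(N)}(t)\to\phi^{*}(t)$ in distribution.

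It then remains to prove this convergence in distribution. Applying Corollary~\ref{cor1} with any time horizon $T\geq t$ gives $\lim_{N\to\infty}\mathbb{E}\big[\sup_{0\le s\le T}|\bar M^{(N)}(s)-\phi^{*}(s)|^2\big]=0$, and restricting the supremum to the single instant $s=t$ yields $\mathbb{E}\big[|\bar M^{(N)}(t)-\phi^{*}(t)|^2\big]\to0$, i.e.\ $\bar M^{(N)}(t)\to\phi^{*}(t)$ in $L^2$. As recalled in Section~\ref{subsec:con}, convergence in $L^2$ implies convergence in probability, which implies convergence in distribution; and since the limit $\phi^{*}(t)$ is deterministic, convergence in distribution to it is precisely $\bar M^{(N)}(t)\Rightarrow\delta_{\phi^{*}(t)}$. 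Translating back through the homeomorphism gives $\rho_N\circ\varepsilon_N^{-1}\to\delta_\mu$, and Theorem~\ref{theo:sznitman} then delivers that $\{\rho_N\}$ is $\mu$-chaotic.

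The only genuinely delicate point is the topological identification of the second step: one must be certain that ``convergence in distribution of $\bar M^{(N)}(t)$ as a $\Delta$-valued random vector'' is literally the same assertion as ``$\rho_N\circ\varepsilon_N^{-1}\to\delta_\mu$ as probability measures on $M(\mathcal{S})$''. This is exactly where the finiteness of $\mathcal{S}$ and the explicit choice of $d$ are used, since they collapse $M(\mathcal{S})$ to a compact subset of Euclidean space on which weak convergence, pointwise convergence of the probability vectors, and metric convergence all coincide; once that is settled, the remainder is the convergence hierarchy of Section~\ref{subsec:con} plus bookkeeping.
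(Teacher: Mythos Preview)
Your proposal is correct and follows essentially the same route as the paper: reduce to Sznitman's characterisation (Theorem~\ref{theo:sznitman}), identify the empirical measure $\varepsilon_N$ with the normalised population process $\bar M^{(N)}(t)$ via~(\ref{eq:empire}) and~(\ref{eq:normal}), and then feed in the convergence supplied by Corollary~\ref{cor1}. The only cosmetic difference is in the last step: the paper argues setwise, bounding $\mathbb{P}_N\{|\varepsilon_N(A)-\mu(A)|\geq c\}$ for each $A\subseteq\mathcal{S}$ via Markov's inequality and the $L^1$ convergence noted in the remark after Corollary~\ref{cor1}, whereas you use the full-vector $L^2$ convergence and the hierarchy $L^2\Rightarrow$ probability $\Rightarrow$ distribution directly; your packaging via the homeomorphism $M(\mathcal{S})\cong\Delta$ is a bit tidier but the content is the same.
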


Let the measure $\mu$ be defined as in Corollary~\ref{cor2}. For $k\in\{0,\ldots,N\}$, we are interested in finding the following probability given that the distributions of $\left\{Y^{(N)}(\tau)\right\}$ form a $\mu$-chaotic sequence:
\[\mathbb{P}\left\{N \bar{M}^{(N)}_i(t)=k\right\}.\]
Using the fact that $Y^{N}(\tau)$ is a mean field interaction model (or is symmetric), we have:
\[\mathbb{P}\left\{N \bar{M}^{(N)}_i(t)=k\right\}=\binom{N}{k}\int_{\mathcal{S}^N}\mathds{1}(\vec{v}_1=i)\ldots \mathds{1}(\vec{v}_k=i)\,\mathds{1}(\vec{v}_{k+1}\neq i)\ldots \mathds{1}(\vec{v}_N\neq i)\,\rho_{N}(d\vec{v})\]
and since $\{\rho_N\}$ is a $\mu$-chaotic sequence, based on Definition~\ref{def:chaotic}:
\[\lim_{N\rightarrow\infty}\mathbb{P}\left\{N \bar{M}^{(N)}_i(t)=k\right\}=\lim_{N\rightarrow\infty}\binom{N}{k}\left(\phi^{*}_i(t)\right)^k\left(1-\phi^{*}_i(t)\right)^{N-k}.\]
This justifies the usage of the following approximation for sufficiently large $N$:
\[\mathbb{P}\left\{N \bar{M}^{(N)}_i(t)=k\right\} \approx \binom{N}{k}\left(\phi^{(N)}_i(t)\right)^k\left(1-\phi^{(N)}_i(t)\right)^{N-k},\]
which following the Poisson approximation theorem can be in turn approximated by:
\begin{equation}\label{eq:chaospoisson}
\mathbb{P}\left\{N \bar{M}^{(N)}_i(t)=k\right\}\approx e^{-N \phi^{(N)}_i(t)}\frac{\left(N\phi^{(N)}_i(t)\right)^k}{k!}.
\end{equation}
In the context of Bianchi's analysis, see~\cite{vvedenskaya2007multiuser} for an implicit application of a similar approximation.

Let $f_\textit{Poisson}(k;\lambda)$ denote the probability density function of a Poisson random variable with rate $\lambda$. Let $f:\Delta\rightarrow\mathbb{R}$ be a function which acts on the random variable $\bar{M}^{(N)}(t)$. A major convenience in using the above terms is their mutual independence in the limit, which at time $t\in \mathbb{R}_{\geq 0}$ allows the approximation of the expected value of $f(\bar{M}^{(N)}(t))$ as:
\begin{equation}\label{eq:poisaverage}
\mathbb{E}\left[f(\bar{M}^{(N)}(t))\right]\approx\sum_{k_1=0}^{\infty}\ldots\sum_{k_I=0}^{\infty} f\left(\frac{k_1}{N},\ldots,\frac{k_I}{N}\right) f_\textit{Poisson}\left(k_1;N\phi^{(N)}_1(t)\right) \ldots f_\textit{Poisson}\left(k_I;N\phi^{(N)}_I(t)\right).
\end{equation}

\subsection{The mean drift}\label{sec:meandrift}
In this part, we explain the approximation of the ODEs in formula~(\ref{eq:ode2}), using result (\ref{eq:poisaverage}). For the occupancy measure $\vec{m}\in \Delta$, the {\it Poisson mean} of the intensity $F_{s,s'}^{(N)}$ is the function $\tilde{F}_{s,s'}^{(N)}:\Delta\rightarrow \mathbb{R}$, where:
\begin{equation}\label{eq:poaverage}
\tilde{F}_{s,s'}^{(N)}(\vec{m})=\sum_{k_1=0}^{\infty}\ldots\sum_{k_I=0}^{\infty} F_{s,s'}^{(N)}\left(\frac{k_1}{N},\ldots,\frac{k_I}{N} \right) f_\textit{Poisson}(k_1;N\vec{m}_{1})\ldots f_\textit{Poisson}(k_I;N\vec{m}_{I})\,.
\end{equation}

Subsequently, the {\it mean drift} $\tilde{F}^{(N)}:\Delta\rightarrow \mathbb{R}^{I}$ is defined as:
\[\tilde{F}^{(N)}(\vec{m})=\sum_{s,s'\in\mathcal{S}}\tilde{F}_{s,s'}^{(N)}(\vec{m})(\vec{e}_{s'}-\vec{e}_s).\]

The Poisson mean of intensities and the mean drift have the following properties:
\begin{itemize}
\item If $F_{s,s'}^{(N)}(\vec{m})=\vec{m}_s\alpha$, where $\alpha$ is a constant or a term $\vec{m}_j$ for some $j\in\mathcal{S}$, then $\tilde{F}_{s,s'}^{(N)}(\vec{m})=F_{s,s'}^{(N)}(\vec{m})$.
\item $\tilde{F}^{(N)}(\vec{m})$ is defined for all $\vec{m}\in\Delta$.
\item Given {\bf smoothness} and {\bf boundedness} of the drift, $\tilde{F}^{(N)}$ is both Lipschitz continuous and bounded on $\Delta$.
\end{itemize}

Based on the derivation of probabilities (\ref{eq:chaospoisson}), it is easy to see that at time $t<\infty$ we have:
\[\mathbb{E}\left[F^{(N)}(\bar{M}^{N}(t))\right]\approx\tilde{F}^{(N)}\left(\phi^{(N)}(t)\right).\]
Moreover, based on (\ref{eq:chaospoisson}), $\mathbb{E}\left[\bar{M}^{N}(t)\right]\approx\phi^{(N)}(t)$, with which the following system of differential equations can be derived from (\ref{eq:ode2}):
\begin{equation}\label{eq:betterode}
\frac{d}{d t}\phi^{(N)}(t)=\tilde{F}^{(N)}\left(\phi^{(N)}(t)\right)
\end{equation}
with the initial condition $\phi^{(N)}(0)=\bar{M}^{(N)}(0)$.

Our construction which relies on the notion of propagation of chaos, means that the differential equations (\ref{eq:betterode}) give better approximations as the system size $N$ grows. This fact is demonstrated by the following theorem.
\begin{theorem}\label{theo:drifts}
For $N\geq 1$, let $\left\{F^{(N)}\right\}$ be the sequence of drifts, and $\left\{\tilde{F}^{(N)}\right\}$ be the corresponding sequence of mean drifts. Assume that the drifts satisfy {\bf smoothness} and {\bf boundedness}. Then for all $\vec{m}\in \Delta$,
\[\lim_{N\rightarrow\infty}\tilde{F}^{(N)}(\vec{m})=F^{*}(\vec{m})\]
almost surely.
\end{theorem}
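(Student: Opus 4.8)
The plan is to exhibit the mean drift as an expectation of the ordinary drift evaluated at an occupancy vector whose coordinates are rescaled independent Poisson variables, and then let a law of large numbers collapse that vector onto $\vec m$. Fix $\vec m\in\Delta$ and let $Z^{(N)}_1,\dots,Z^{(N)}_I$ be independent with $Z^{(N)}_i$ Poisson of mean $N\vec m_i$; put $\vec Z^{(N)}=\tfrac1N\big(Z^{(N)}_1,\dots,Z^{(N)}_I\big)$. Reading (\ref{eq:poaverage}) against the Poisson density, $\tilde F^{(N)}_{s,s'}(\vec m)$ is exactly $\mathbb{E}\big[F^{(N)}_{s,s'}(\vec Z^{(N)})\big]$, where $F^{(N)}_{s,s'}$ is the Lipschitz extension already implicit in (\ref{eq:poaverage}); since that extension grows at most linearly and Poisson laws have all moments, each such series converges absolutely, and summing over $s,s'$ against $\vec e_{s'}-\vec e_s$ and exchanging the finite sum with the expectation gives
\[
\tilde F^{(N)}(\vec m)=\mathbb{E}\big[F^{(N)}(\vec Z^{(N)})\big].
\]

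Next I would show $\vec Z^{(N)}\to\vec m$. Using the coupling $Z^{(N)}_i=\mathcal{Y}_i(N\vec m_i)$ for independent unit Poisson processes $\mathcal{Y}_i$, the strong law of large numbers for the Poisson process gives $\mathcal{Y}_i(N\vec m_i)/N\to\vec m_i$ almost surely, hence $\vec Z^{(N)}\to\vec m$ a.s.; alternatively $\mathbb{E}|\vec Z^{(N)}-\vec m|^2=\sum_i\operatorname{Var}\!\big(Z^{(N)}_i/N\big)=\sum_i N\vec m_i/N^2=1/N\to0$ already gives convergence in probability. Then I would split
\[
F^{(N)}(\vec Z^{(N)})-F^{*}(\vec m)=\big(F^{(N)}(\vec Z^{(N)})-F^{(N)}(\vec m)\big)+\big(F^{(N)}(\vec m)-F^{*}(\vec m)\big);
\]
the second term tends to $0$ by {\bf limit existence} (which is what gives $F^{*}$ its meaning), and the first is bounded by $L_N\,|\vec Z^{(N)}-\vec m|$ through Lipschitz continuity of $F^{(N)}$, which vanishes once the Lipschitz constants are kept under control (the point addressed below). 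Combining, $F^{(N)}(\vec Z^{(N)})\to F^{*}(\vec m)$ a.s.\ (resp.\ in probability); the integrands being uniformly bounded, hence uniformly integrable, bounded/dominated convergence (Theorem~\ref{the:lebesgue}) passes the limit through the expectation, yielding $\lim_{N\to\infty}\tilde F^{(N)}(\vec m)=F^{*}(\vec m)$. Both sides are deterministic, so the ``almost surely'' in the statement is the a.s.\ convergence of the coupled random variables driving the argument.

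The delicate point --- the main obstacle --- is the first term above together with the behaviour of $F^{(N)}$ off $\Delta$: {\bf smoothness} only supplies, for each $N$, \emph{some} Lipschitz constant $L_N$, and uniform convergence of $F^{(N)}$ to $F^{*}$ on $\Delta$ does not by itself bound $L_N$ or the growth of the extension. The clean route is to assume --- as holds in all the examples above and as is customary in the density-dependent setting --- a common Lipschitz constant, so that $L_N|\vec Z^{(N)}-\vec m|\to0$ in $L^1$ directly and the off-$\Delta$ contribution is negligible because $\vec Z^{(N)}$ concentrates on $\Delta$. Absent that, one splits on the event that $\vec Z^{(N)}$ strays a fixed distance from $\vec m$: this event has probability exponentially small in $N$ by the Chernoff bound for the Poisson distribution, which must be checked to beat the growth of $L_N$ and of the linear tail of the extension, the complementary event being controlled by the Lipschitz estimate and {\bf boundedness}.
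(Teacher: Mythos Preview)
Your proposal is correct and follows essentially the same route as the paper: represent $\tilde F^{(N)}(\vec m)$ as the expectation of $F^{(N)}$ evaluated at a rescaled Poisson vector, invoke a law of large numbers to concentrate that vector at $\vec m$, use Lipschitz continuity to control the fluctuation, appeal to {\bf limit existence} for the deterministic piece, and close with dominated convergence (Theorem~\ref{the:lebesgue}). The paper's proof carries this out under an additional {\bf simplicity} hypothesis (each $Q^{(N)}_{s,s'}$ depends on a single coordinate $\vec m_j$), reducing to a one-dimensional Poisson sum $S_N$ and the weak law (Theorem~\ref{theo:wlln}); your version works directly in all $I$ coordinates with independent Poisson components and is therefore closer to the general formula~(\ref{eq:poaverage}). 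The ``delicate point'' you flag --- that {\bf smoothness} provides a Lipschitz constant $L_N$ for each $N$ but no uniform bound, and that $\vec Z^{(N)}$ may fall outside $\Delta$ --- is real, and the paper's argument has the same lacuna: it writes ``according to assumption $Q_{s,s'}^{(N)}$ is Lipschitz continuous; for all $k_1\leq k_i\leq k_m$ we have $Q_{s,s'}^{(N)}(\tfrac{k_i}{N}\vec e_j)\simeq Q_{s,s'}^{(N)}(\vec m_j\,\vec e_j)$'' without controlling $L_N$ uniformly, so your caveat is well placed rather than an oversight relative to the paper.
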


\begin{example}\label{ex:ex4}
For the system described in Example~\ref{ex:interf1} we find the mean drift, using the drift $F^{(N)}$ described in (\ref{eq:ex2drift1}). The mean drift takes the relatively simple shape which follows:
\begin{equation*}\label{eq:ex2result}
\renewcommand*{\arraystretch}{1.5}
\tilde{F}^{(N)}(\vec{m})=\begin{pmatrix}
\frac{p_1\vec{m}_1+p_2\vec{m}_2}{2} exp\left\{-\frac{p_1 N\vec{m}_1+p_2 N\vec{m}_2}{2}\right\} -p_1 \vec{m}_1 \\
-\frac{p_1\vec{m}_1+p_2\vec{m}_2}{2} exp\left\{-\frac{p_1 N\vec{m}_1+p_2 N\vec{m}_2}{2}\right\} + p_1 \vec{m}_1\\
\end{pmatrix}
\end{equation*}
This can be used to construct the following system of ODEs:
\begin{flalign*}
\frac{d}{dt}\phi^{(N)}_1&= \frac{p_1\phi^{(N)}_1(t)+p_2\phi^{(N)}_2(t)}{2} exp\left\{-\frac{p_1 N\phi^{(N)}_1(t)+p_2 N\phi^{(N)}_2(t)}{2}\right\} -p_1 \phi^{(N)}_1(t)& \\
\frac{d}{dt}\phi^{(N)}_2&= - \frac{p_1\phi^{(N)}_1(t)+p_2\phi^{(N)}_2(t)}{2} exp\left\{-\frac{p_1 N\phi^{(N)}_1(t)+p_2 N\phi^{(N)}_2(t)}{2}\right\}+p_1 \phi^{(N)}_1(t) & 
\end{flalign*}

\begin{figure}[h]
\centering
\scalebox{0.79}{
\pgfplotstableread{test.data}{\test}
\pgfplotstableread{approximation.data}{\approximation}
\pgfplotstableread{simulation.data}{\simulation}
\begin{tikzpicture}
\begin{axis}[
height=8cm,
width=15cm,
ylabel=$\phi_2^{(N)}(1000)$,
xlabel=$N$,
xlabel near ticks,
xticklabels={1,2,5,10,20,50,100,200,400,800,1600},
xtick={1,...,11},
minor y tick num=1,
grid=both,
x tick label style={rotate=90,anchor=east},
legend pos=north west]
\addplot [x={nodes}, y={p},mark=triangle,solid,every mark/.append style={solid, fill=black}] table {\test};
\addplot [x={nodes}, y={p},mark=otimes,loosely dotted,every mark/.append style={solid, fill=gray}] table {\approximation};
\addplot [x={nodes}, y={p},mark=+,dashed] table {\simulation};
\addlegendentry{Solutions of ODEs with drift}
\addlegendentry{Solutions of ODEs with mean drift}
\addlegendentry{Results of the transient analysis}
\end{axis}
\end{tikzpicture}
}
\caption{Comparison between the proportion of nodes in the back-off state at time $t=1000$ ($\phi_2^{(N)}(1000)$) for different network sizes $N$, based on a transient analysis of the Markov models (the solutions of the Chapman-Kolmogorov equations) and a mean field analysis by the ODEs incorporating the mean-drift.}\label{fig:simodecomparison}
\end{figure}
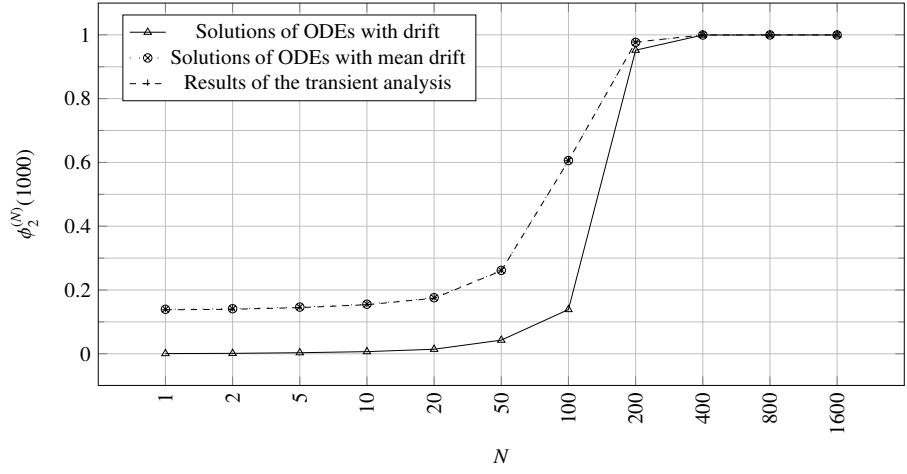
Let $p_1=0.008$ and $p_2=0.05$ and let the initial condition be $\phi_0=(1,0)$, i.e., all the nodes are initially in state $1$. In Fig.~\ref{fig:simodecomparison} the results of solving the ODEs for different values of $N$ are given, and are compared with results from the explicit transient analysis of Markov models with simultaneous updates. Observe that regardless of the size of the system, the approximations derived by the two methods closely match one another.
\end{example}

\section{Proofs}\label{sec:proofs}
Detailed proofs for the theorems and corollaries follow.

\subsection{Proof of Proposition~\ref{pro:driftgen}}\label{subsec:gen}
\begin{lemma}
Let $N\geq 1$ be a natural number, and $\{\bar{M}^{(N)}(t):t\in T_G\}$ be a normalized population process satisfying the clock independence assumption. Let $\{\mathcal{T}_t:t\in T\}$ be a sequence of linear operators, which for functions $f:\Delta^{(N)}\rightarrow\Delta^{(N)}$ and $t\in T$ satisfy:
\[\mathcal{T}_t\,f(\vec{m})=\mathbb{E}\left[f(\bar{M}^{(N)}(t))\mid \bar{M}^{(N)}(0)=\vec{m}\right].\]
then given the infinitesimal generator $A$ of $\bar{M}^{(N)}(t)$, for the identity function $f$ we have $Af=F^{(N)}$.
\end{lemma}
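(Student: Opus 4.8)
The plan is to evaluate the generator $A$ on the identity map $f(\vec{m})=\vec{m}$ directly from its definition~(\ref{eq:generator}), using the description of $\bar{M}^{(N)}$ supplied by the infinitesimal agent transition map $Q^{(N)}$. Under the clock independence assumption the process $\bar{M}^{(N)}$ is, in the continuous-time limit $D\to\infty$, a Markov chain on the \emph{finite} set $\Delta^{(N)}$ whose only jumps out of a state $\vec{m}$ move a single agent from some state $s$ to some state $s'\neq s$, shifting the occupancy measure by $\frac1N(\vec{e}_{s'}-\vec{e}_s)$; since there are $N\vec{m}_s$ agents in state $s$, each moving to $s'$ at rate $Q^{(N)}_{s,s'}(\vec{m})$ and no two agents jumping simultaneously, the rate of this jump is $N\vec{m}_s\,Q^{(N)}_{s,s'}(\vec{m})$. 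Because $\Delta^{(N)}$ is finite these rates are bounded, so the identity map automatically lies in the domain of $A$ and there is no domain subtlety to resolve.

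The next step is to compute $\mathcal{T}_t f(\vec{m})-f(\vec{m})$ for small $t$ by conditioning on $\bar{M}^{(N)}(0)=\vec{m}$ and splitting on the number of agent jumps in $[0,t]$: on the no-jump event the increment is $0$; on the event that exactly one jump, from $s$ to $s'$, occurred it is $\frac1N(\vec{e}_{s'}-\vec{e}_s)$, which has probability $t\,N\vec{m}_s\,Q^{(N)}_{s,s'}(\vec{m})+o(t)$; and the events with two or more jumps contribute $o(t)$ because all rates are bounded. Taking expectations gives
\[\mathcal{T}_t f(\vec{m})-f(\vec{m})=t\sum_{s\neq s'} N\vec{m}_s\,Q^{(N)}_{s,s'}(\vec{m})\cdot\tfrac1N(\vec{e}_{s'}-\vec{e}_s)+o(t),\]
and dividing by $t$ and letting $t\to0^{+}$ yields, by~(\ref{eq:generator}),
\[Af(\vec{m})=\sum_{s\neq s'}\vec{m}_s\,Q^{(N)}_{s,s'}(\vec{m})(\vec{e}_{s'}-\vec{e}_s)=F^{(N)}(\vec{m}),\]
which is exactly the claim; the uniqueness of the image $g$ in~(\ref{eq:generator}) then closes the argument.

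A shortcut worth mentioning, more in line with the time-slot bookkeeping of Section~\ref{subsec:drift}, is that for the $\epsilon$-step chain one has the exact identity $\mathcal{T}_\epsilon f(\vec{m})-f(\vec{m})=\hat{F}^{(N)}(\vec{m})$ by time-homogeneity of $M^{(N)}$, hence $\epsilon^{-1}\bigl(\mathcal{T}_\epsilon f(\vec{m})-f(\vec{m})\bigr)=D\hat{F}^{(N)}(\vec{m})$, and letting $D\to\infty$ recovers $F^{(N)}(\vec{m})$ by the very definition of the drift. The one point that genuinely needs care in either route is making precise that ``the infinitesimal generator of $\bar{M}^{(N)}$'' refers to this $D\to\infty$ continuous-time chain rather than to the fixed-$\epsilon$ step process (whose naive generator would vanish at $t\to 0^+$); once that is pinned down, all that remains is the rate-and-jump-vector computation above, so I expect no serious obstacle.
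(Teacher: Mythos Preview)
Your proposal is correct. The main argument you give --- conditioning on the number of agent jumps in $[0,t]$ and reading off the standard jump-chain generator formula --- is a genuinely different route from the paper's. The paper instead works explicitly with the discrete time-slot structure: it telescopes $\mathbb{E}\bigl[M^{(N)}(\epsilon\lfloor Dt\rfloor)\mid M^{(N)}(0)\bigr]-M^{(N)}(0)$ into a sum of one-step increments $\hat{F}^{(N)}(M^{(N)}(\epsilon(s{-}1)))$, rewrites this sum as a Riemann-type integral $\int_0^t F^{(N)}(\bar{M}^{(N)}(r))\,dr$ (up to a boundary term over $[\epsilon\lfloor Dt\rfloor, t]$ that vanishes under the clock independence assumption), and then lets $t\to 0$ to extract $F^{(N)}(\vec{m})$. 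Your ``shortcut'' $\epsilon^{-1}\bigl(\mathcal{T}_\epsilon f-f\bigr)=D\hat{F}^{(N)}\to F^{(N)}$ is in fact a compressed version of exactly this computation. Your main argument is cleaner and more standard for finite-state jump processes, and it makes the domain issue trivial; the paper's approach has the mild advantage of staying closer to the objects $\hat{F}^{(N)}$, $M^{(N)}$ and the grid $T_G$ that were actually introduced in Section~\ref{subsec:drift}, at the cost of some floor-function and boundary-integral bookkeeping. The subtlety you flag --- that the generator in question is that of the $D\to\infty$ process, not the fixed-$\epsilon$ chain --- is precisely the point the paper handles via its remark that ``the construction of $F^{(N)}$ involves the clock independence assumption, $t\geq\epsilon$''.
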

\begin{proof}
Given $\epsilon=\frac{1}{D}$, and using (\ref{eq:generator}):
\begin{flalign*}
Af&=\lim_{t\rightarrow 0}\frac{\mathbb{E}\left[\bar{M}^{(N)}(t)\mid \bar{M}^{(N)}(0)\right]-\bar{M}^{(N)}(0)}{t}\\
&=\lim_{t\rightarrow 0}\frac{\mathbb{E}\left[\bar{M}^{(N)}(t)-M^{(N)}(\epsilon\lfloor Dt\rfloor)\mid M^{(N)}(0)\right]+\mathbb{E}\left[M^{(N)}(\epsilon\lfloor Dt\rfloor)\mid M^{(N)}(0)\right]-M^{(N)}(0)}{t}\\
&=\lim_{t\rightarrow 0}\frac{0+\sum_{s=1}^{\lfloor Dt\rfloor}\mathbb{E}\left[M^{(N)}(\epsilon\, s)-M^{(N)}(\epsilon\,(s-1))\mid M^{(N)}(\epsilon\,(s-1))\right]}{t}\\
&=\lim_{t\rightarrow 0}\frac{\sum_{s=1}^{\lfloor Dt\rfloor}\hat{F}^{(N)}(M^{(N)}(\epsilon\,(s-1)))}{t}\\
&=\lim_{t\rightarrow 0}\frac{\sum_{s=1}^{\lfloor Dt\rfloor}\epsilon\, F^{(N)}(M^{(N)}(\epsilon\,(s-1)))}{t}
\end{flalign*}
Based on the definition of $\bar{M}^{(N)}(t)$ the numerator can be written as:
\begin{flalign*}
\sum_{s=1}^{\lfloor Dt\rfloor}\epsilon\, F^{(N)}(M^{(N)}(\epsilon\,(s-1)))&=\int^{Dt}_{0}F^{(N)}(\bar{M}^{(N)}(\epsilon\, s))\epsilon\, ds\\
&\quad\quad-\int^{Dt}_{\lfloor Dt\rfloor}F^{(N)}(\bar{M}^{(N)}(\epsilon\, s))\epsilon\, ds
\end{flalign*}
Since the construction of $F^{(N)}$ involves the clock independence assumption, $t\geq \epsilon$, in which case:
\begin{flalign*}
Af&=\lim_{t\rightarrow 0}\frac{\int^{Dt}_{0}F^{(N)}\left(\bar{M}^{(N)}(\epsilon\, s)\right)\epsilon\, ds}{t}\\
&=\lim_{t\rightarrow 0}\frac{\int^{t}_{0}F^{(N)}\left(\bar{M}^{(N)}(r)\right)dr}{t}\\
&=F^{(N)}\left(\bar{M}^{(N)}(0)\right)=F^{(N)}(\vec{m}).
\end{flalign*}
\end{proof}

Thus, we have proven that $\left(f,F^{(N)}\right)\in A$, and subsequently according to (\ref{eq:generator2}) the following equation holds:
\[
\mathbb{E}\left[\bar{M}^{(N)}(t)\mid \bar{M}^{(N)}(0)\right]-\bar{M}^{(N)}(0)=\int_{0}^{t}\mathbb{E}\left[F^{(N)}\left(\bar{M}^{(N)}(s)\right)\mid \bar{M}^{N}(0)\right]ds.
\]

\subsection{Proof of Theorem~\ref{theo1}}
The following proof and the proof for Corollary\ref{cor1} are largely based on a similar proof in~\cite{ethier2009markov} which has been repeated in many other work. Here, our main goal is to give a simple enough version of the proof, yet without big unexplained leaps.

Given the random processes $W^{(N)}_{s,s'}(t)$ defined in Section~\ref{subsec:drift} and satisfying equation~(\ref{eq:counter}), define $Z^{(N)}_{s,s'}(t)$ as the total random number of agents taking a transition from state $s\in\mathcal{S}$ to $s'\in\mathcal{S}$ in the time interval $[0,t]$:
\[Z^{(N)}_{s,s'}(t)=\lim_{D\rightarrow\infty}\sum_{i=1}^{\lfloor Dt\rfloor}W^{(N)}_{s,s'}(i\, \epsilon),\]
then the definition of the intensity of transitions from $s$ to $s'$ (also ultimately based on $W^{(N)}_{s,s'}(t)$) implies the following:
\begin{equation}\label{eq:counts}
\mathbb{E}\left[Z^{(N)}_{s,s'}(t)\right]=N\int_{0}^{t}F^{(N)}_{s,s'}\left(\bar{M}^{(N)}(s)\right)ds.
\end{equation}
The process $Z^{(N)}_{s,s'}(t)$ is Markov in $T_G$, and its sample paths are increasing functions which are right-continuous with left limits.

We now present a result given in~\cite{kurtz1981approximation} (theorem 7.1). For each process $Z^{(N)}_{s,s'}(t)$ associated with an intensity from $s$ to $s'$, assume an increasing Markov jump process (a counting process) $\mathcal{N}(t)$ for which $\mathbb{E}[\mathcal{N}(t)]=ct$ for some $c\in\mathbb{R}_{\geq 0}$. Then Following~\cite{watanabe1964discontinuous}, the process $\mathcal{N}(t)$ is a Poisson process. Moreover, following~\cite{volkonskii1958random,kurtz1981approximation}, since the processes $Z^{(N)}_{s,s'}(t)$ and $\mathcal{N}(t)$ are Markov (in $T_G$ and in $\mathbb{R}$ respectively), and increasing, for $t\in T_G$ there exist stopping times $\tau(t)$ which satisfy:
\begin{equation}\label{eq:stptime}
Z^{(N)}_{s,s'}(t)=\mathcal{N}(\tau(t)).
\end{equation}
Essentially, these stopping times maintain the equality of the values over the discrete jumps of processes. Following~\cite{meyer1971demonstration,
brown1988simple} for all pairs $s,s'\in\mathcal{S}$ the resulting Poisson processes are mutually independent. This allows freedom in combining these processes to describe the behaviour of the process $\bar{M}^{(N)}(t)$.

From (\ref{eq:counts}) and (\ref{eq:stptime}) we may write:
\[\mathbb{E}\left[\mathcal{N}(\tau(t))\right]=N\int_{0}^{t}F^{(N)}_{s,s'}\left(\bar{M}^{(N)}(s)\right)ds.\]
Let $\{\mathcal{Y}(t):t\in\mathbb{R}\}$ be a unit Poisson process (see section~\ref{subsec:poisson}). Using equation (\ref{eq:unitpoisson}) we may exchange time and intensity to write $\mathcal{N}(\tau(t))$ in terms of unit Poisson processes as $\mathcal{N}(\tau(t))=\mathcal{Y}(\mathbb{E}[\mathcal{N}(\tau(t))])$, so we may rewrite  equation (\ref{eq:counts}) as:
\begin{equation}\label{eq:unitrep1}
Z^{(N)}_{s,s'}(t)=\mathcal{Y}\left(N\int_{0}^{t}F^{(N)}_{s,s'}\left(\bar{M}^{(N)}(s)\right)ds\right)
\end{equation}

Note that for the process $\bar{M}^{(N)}(t)$ we have:

\[\bar{M}^{(N)}(t)=\bar{M}^{(N)}(0)+\sum_{s,s'\in\mathcal{S}\times\mathcal{S}}\frac{1}{N}Z^{(N)}_{s,s'}(t)\,(e_{s'}-e_{s}),\]

which combined with equation (\ref{eq:unitrep1}) results in the following well-known format, often called the Poisson representation of the population process:
\begin{equation}\label{eq:unitrep2}
\bar{M}^{(N)}(t)=\bar{M}^{(N)}(0)+\sum_{k=(s,s')\in\mathcal{S}\times\mathcal{S}}\frac{1}{N}\mathcal{Y}_k\left(N\int_{0}^{t}F_k^{(N)}(\bar{M}^{(N)}(s))ds\right)\,(e_{s'}-e_{s})
\end{equation}

For all pairs $k=(s,s')\in\mathcal{S}\times\mathcal{S}$ consider the following process: 
\[\tilde{\mathcal{Y}}_k\left(N\int_{0}^{t}F_k^{(N)}(\bar{M}^{(N)}(s))ds\right)=\mathcal{Y}_k\left(N\int_{0}^{t}F_k^{(N)}(\bar{M}^{(N)}(s))ds\right)-N\int_{0}^{t}F_k^{(N)}(\bar{M}^{(N)}(s))ds\]
which is a compensated unit Poisson process and hence a martingale (see Section~\ref{subsec:poisson}). From here:
\begin{flalign*}
\bar{M}^{(N)}(t)&=\bar{M}^{(N)}(0)+\sum_{k=(s,s')\in\mathcal{S}\times\mathcal{S}}\frac{1}{N}\left[\tilde{\mathcal{Y}}_k\left(N\int_{0}^{t}F_k^{(N)}(\bar{M}^{(N)}(s))ds\right)+N\int_{0}^{t}F_k^{(N)}(\bar{M}^{(N)}(s))ds\right](e_{s'}-e_{s})\\
&=\bar{M}^{(N)}(0)+\sum_{k=(s,s')\in\mathcal{S}\times\mathcal{S}}\frac{1}{N}\tilde{\mathcal{Y}}_k\left(N\int_{0}^{t}F_k^{(N)}(\bar{M}^{(N)}(s))ds\right)\,(e_{s'}-e_{s})+\int_{0}^{t}F^{(N)}(\bar{M}^{(N)}(s))ds.
\end{flalign*}
In this decomposition, the process:
\[\mu^{(N)}(t)=\sum_{k=(s,s')\in\mathcal{S}\times\mathcal{S}}\frac{1}{N}\tilde{\mathcal{Y}}_k\left(N\int_{0}^{t}F_k^{(N)}(\bar{M}^{(N)}(s))ds\right)\,(e_{s'}-e_{s})\]
is a martingale and using (\ref{eq:doob}) (Doob's inequality) for $\alpha=2$ we have:
\[\mathbb{E}\left[\sup_{t\leq T}\mu^{(N)}(t)^2\right]\leq 4\mathbb{E}\left[\mu^{(N)}(T)^2\right],\]
and based on {\bf boundedness}, $F_k^{(N)}$ are bounded, say by constant $A$, therefore:
\[\mathbb{E}\left[\mu^{(N)}(T)^2\right]\leq\frac{1}{N}\sum_{i\in\mathcal{S}}e_{i} A T.\]
Set $c_2=4A\lvert I\rvert$. Next, we write the system of ODEs in its integral form:
\[\phi^{(N)}(t)=\phi(0)+\int_{0}^{t}F^{(N)}\left(\phi^{(N)}(s)\right)ds.\]
For $T\geq 0$:
\begin{flalign*}
\sup_{t\leq T}\left\lvert \bar{M}^{(N)}(t)-\phi^{(N)}(t)\right\rvert &\leq  \left\lvert \bar{M}^{(N)}(0)-\phi^{(N)}(0)\right\rvert+\sup_{t\leq T}\left\lvert\,\mu^{(N)}(t)\right\rvert\\
&\quad+\sup_{t\leq T}\left\lvert \int_{0}^{t}F^{(N)}(\bar{M}^{(N)}(s))ds-\int_{0}^{t}F^{(N)}(\phi^{(N)}(s))ds\right\rvert\\
&\leq \left\lvert \bar{M}^{(N)}(0)-\phi^{(N)}(0)\right\rvert+\sup_{t\leq T}\left\lvert\,\mu^{(N)}(t)\right\rvert+\sup_{t\leq T}\int_{0}^{t}\left\lvert F^{(N)}(\bar{M}^{(N)}(s))-F^{(N)}(\phi^{(N)}(s))\right\rvert ds
\end{flalign*}

Since according to assumptions each intensity $F_k^{(N)}$ is Lipschitz continuous, say by constant $L_k$, there exists a constant $L=max(L_k:k\in\mathcal{S}\times \mathcal{S})$, for which:
\begin{flalign*}
\sup_{t\leq T}\left\lvert \bar{M}^{(N)}(t)-\phi^{(N)}(t)\right\rvert &\leq  \left\lvert \bar{M}^{(N)}(0)-\phi^{(N)}(0)\right\rvert+\sup_{t\leq T}\left\lvert\,\mu^{(N)}(t)\right\rvert+L\sup_{t\leq T}\int_{0}^{t}\left\lvert \bar{M}^{(N)}(s)-\phi^{(N)}(s)\right\rvert ds\\
&\leq \left\lvert \bar{M}^{(N)}(0)-\phi^{(N)}(0)\right\rvert+\sup_{t\leq T}\left\lvert\,\mu^{(N)}(t)\right\rvert+L\int_{0}^{T}\sup_{t\leq s}\left\lvert \bar{M}^{(N)}(t)-\phi^{(N)}(t)\right\rvert ds
\end{flalign*}
Let $f(T)=\sup_{t\leq T}\lvert \bar{M}^{(N)}(t)-\phi^{(N)}(t)\rvert$, using Gr\"onwall's inequality (Lemma~\ref{lem:gronwall}):
\[\sup_{t\leq T}\left\lvert \bar{M}^{(N)}(t)-\phi^{(N)}(t)\right\rvert\leq \exp(L T)\left(\left\lvert \bar{M}^{(N)}(0)-\phi^{(N)}(0)\right\rvert+\sup_{t\leq T}\left\lvert\,\mu^{(N)}(t)\right\rvert\right).\]
Setting $c_1=L$, $\mathcal{M}^{(N)}(T)=\sup_{t\leq T}\left\lvert\,\mu^{(N)}(t)\right\rvert$ and $c_2$ as found earlier then proves the theorem.

\subsection{Proof of Corollary~\ref{cor1}}
In what follows we consider the assumptions of corollary \ref{cor1} to hold. Our presentation takes several properties of topological spaces for granted, with the understanding that detailed proofs are available to the reader and we are allowed to avoid discussions aimed at overcoming such difficulties.

\begin{lemma}\label{lem:conv}
Let the sequence of drifts $\{F^{(N)}\}$ converge uniformly to the bounded function $F^{*}$, that is:
\[\lim_{N\rightarrow\infty}\sup_{\vec{m}\in\Delta^{(N)}}\left\lvert F^{(N)}(\vec{m})-F^{*}(\vec{m})\right\rvert=0.\]
Then the sequence of solutions $\{\phi^{(N)}\}$ of respective ODEs converges to $\phi^{*}(t)$, the solution of the limit system of ODEs:
\[\lim_{N\rightarrow\infty} \phi^{(N)}(t)=\phi^{*}(t).\]
\end{lemma}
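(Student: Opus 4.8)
The plan is the standard Gr\"onwall-type argument showing that the solution of an ODE depends continuously on its vector field. First I would rewrite both systems in integral form,
\[
\phi^{(N)}(t)=\phi_0+\int_0^t \bar{F}^{(N)}\bigl(\phi^{(N)}(s)\bigr)\,ds,
\qquad
\phi^{*}(t)=\phi_0+\int_0^t F^{*}\bigl(\phi^{*}(s)\bigr)\,ds,
\]
where $\bar{F}^{(N)}$ is the Lipschitz extension of $F^{(N)}$ to $\Delta$ supplied by {\bf smoothness} and $\phi_0$ is the common initial condition (a discrepancy $|\phi^{(N)}(0)-\phi^{*}(0)|$ would only add a term of the same size to the final bound). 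Both trajectories remain in the compact set $\Delta$ — the drift of a population process points inward along $\partial\Delta$ — so iterating Picard--Lindel\"of (Theorem~\ref{the:piclin}) yields existence on all of $[0,T]$. Subtracting the two integral equations and applying the triangle inequality under the integral sign, with the intermediate value $F^{*}(\phi^{(N)}(s))$ inserted, gives
\[
\bigl|\bar{F}^{(N)}(\phi^{(N)}(s))-F^{*}(\phi^{*}(s))\bigr|
\le
\bigl|\bar{F}^{(N)}(\phi^{(N)}(s))-F^{*}(\phi^{(N)}(s))\bigr|
+\bigl|F^{*}(\phi^{(N)}(s))-F^{*}(\phi^{*}(s))\bigr|,
\]
where the first term is at most $\varepsilon_N:=\sup_{\vec m\in\Delta}|\bar{F}^{(N)}(\vec m)-F^{*}(\vec m)|$, which tends to $0$ by {\bf limit existence}, and the second is at most $L^{*}|\phi^{(N)}(s)-\phi^{*}(s)|$, with $L^{*}$ a Lipschitz constant of $F^{*}$.

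Collecting these estimates yields, for every $t\le T$,
\[
\bigl|\phi^{(N)}(t)-\phi^{*}(t)\bigr|\le \varepsilon_N T+L^{*}\int_0^t\bigl|\phi^{(N)}(s)-\phi^{*}(s)\bigr|\,ds,
\]
so Gr\"onwall's inequality (Lemma~\ref{lem:gronwall}), applied to $f(t)=|\phi^{(N)}(t)-\phi^{*}(t)|$ with $C=\varepsilon_N T$ and $D=L^{*}$, gives $\sup_{t\le T}|\phi^{(N)}(t)-\phi^{*}(t)|\le \varepsilon_N T\,e^{L^{*}T}$. Letting $N\to\infty$ sends the right-hand side to $0$, which yields the pointwise convergence $\phi^{(N)}(t)\to\phi^{*}(t)$ asserted in the lemma — and in fact convergence uniform in $t$ on compact intervals, which is the form actually used in the proof of Corollary~\ref{cor1}.

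The one point requiring care is that the Lipschitz constant driving the Gr\"onwall step must be independent of $N$: {\bf smoothness} only furnishes an $N$-dependent constant $L_N$ for $\bar{F}^{(N)}$, and if $\{L_N\}$ were unbounded the estimate would collapse. I avoid this by estimating with a Lipschitz constant $L^{*}$ of the limit $F^{*}$ rather than with $L_N$; $F^{*}$ is Lipschitz in any case, as that is precisely the regularity under which the limit ODE has the unique solution $\phi^{*}$ named in the statement (when a uniform bound on $\{L_N\}$ is available one may simply take $L^{*}=\sup_N L_N$). A related bookkeeping remark: $\phi^{(N)}(s)$ need not lie on the lattice $\Delta^{(N)}$, so the relevant rate $\varepsilon_N$ is the supremum over all of $\Delta$ afforded by {\bf limit existence} applied to the extensions $\bar{F}^{(N)}$, which agrees with the $\sup_{\vec m\in\Delta^{(N)}}$ form in the statement since $\bigcup_N\Delta^{(N)}$ is dense in $\Delta$.
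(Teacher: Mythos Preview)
Your proof is correct and in fact more careful than the paper's own argument, which proceeds differently. The paper argues at the level of the ODEs themselves: from uniform convergence of $F^{(N)}$ to $F^{*}$ it infers $\sup_t\lvert\phi'^{(N)}(t)-F^{*}(\phi^{(N)}(t))\rvert\to 0$, reads this as saying that $\phi^{(N)}$ asymptotically solves the limit ODE, and then invokes uniqueness (Picard--Lindel\"of) to identify the limit with $\phi^{*}$. That route is short but leaves implicit both the existence of $\lim_N\phi^{(N)}$ and the passage from ``approximately solves the limit ODE'' to ``converges to its solution''. Your Gr\"onwall estimate $\sup_{t\le T}\lvert\phi^{(N)}(t)-\phi^{*}(t)\rvert\le \varepsilon_N T\,e^{L^{*}T}$ fills exactly this gap: it proves convergence directly, gives uniformity on compact time intervals, and supplies an explicit rate. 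The device of inserting $F^{*}(\phi^{(N)}(s))$ so that the Lipschitz constant appearing in the Gr\"onwall step is that of $F^{*}$ rather than an $N$-dependent $L_N$ is the right way to keep the bound uniform in $N$; this subtlety is not visible in the paper's treatment.
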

\begin{proof}
Let $\phi^{(N)}(t)$ be the solution of the initial value problem $\phi'(t)=F^{(N)}(\phi(t))$ with $\phi(0)=\phi^{(N)}(0)=\phi_0$ at time $t\in\mathbb{R}_{\geq 0}$. It is possible to make the following assertions based on the unofirm convergence assumption:
\[\lim_{N\rightarrow\infty}\sup_{t}\,\left\lvert F^{(N)}(\phi^{(N)}(t))-F^{*}(\phi^{(N)}(t))\right\rvert=0~~\Rightarrow~~\lim_{N\rightarrow\infty}\sup_{t}\,\left\lvert \phi'^{(N)}(t)-F^{*}(\phi^{(N)}(t))\right\rvert=0.\]
The second limit implies that as $N\rightarrow\infty$ at every point $t\in\mathbb{R}_{\geq 0}$, $\phi^{(N)}(t)$ is a solution to the limit system of ODEs. Given that $lim_{N\rightarrow\infty}\phi^{(N)}(0)=\phi^{*}(0)=\phi_0$ and $F^{(N)}$ are Lipschitz, the existence and uniqueness of the solution implies that for all $t$:
\begin{equation*}
\lim_{N\rightarrow\infty}\phi^{(N)}(t)=\phi^{*}(t).
\end{equation*}
\end{proof}

We now return to the proof of Corollary~\ref{cor1}. Using Theorem~\ref{theo1} the following inequality holds:
\begin{equation*}
\sup_{0\leq t\leq T}\left\lvert \bar{M}^{(N)}(t)-\phi^{(N)}(t)\right\rvert\leq \exp(c_1 T)\left(\mathcal{M}^{(N)}(T)+\left\lvert \bar{M}^{(N)}(0)-\phi_0\right\rvert\right)
\end{equation*}
therefore:
\begin{flalign*}
\sup_{0\leq t\leq T}\left\lvert \bar{M}^{(N)}(t)-\phi^{(N)}(t)\right\rvert^2 &\leq \exp(c_1 T)^2\left(\mathcal{M}^{(N)}(T)+\left\lvert \bar{M}^{(N)}(0)-\phi_0\right\rvert\right)^2
\end{flalign*}
and,
\begin{flalign*}
\mathbb{E}\left[\sup_{0\leq t\leq T} \left\lvert\bar{M}^{(N)}(t)-\phi^{(N)}(t)\right\rvert^2\right] &\leq \exp(c_1 T)^2\left(\mathbb{E}[\mathcal{M}^{(N)}(T)^2]+\left\lvert\bar{M}^{(N)}(0)-\phi_0\right\rvert ^2 \right)\\
&\leq \exp(c_1 T)^2\left(c_2 T\frac{1}{N}+\left\lvert \bar{M}^{(N)}(0)-\phi_0\right\rvert^2\right).
\end{flalign*}
Hence since $\bar{M}^{(N)}(0)$ converges to $\phi_0$ in mean-square, for $T<\infty$:
\begin{equation*}
\lim_{N\rightarrow\infty}\mathbb{E}\left[\sup_{0\leq t\leq T}\left\lvert \bar{M}^{(N)}(t)-\phi^{(N)}(t)\right\rvert^2\right]=0,
\end{equation*}
Which based on Lemma~\ref{lem:conv}, implies that:
\begin{equation*}
\lim_{N\rightarrow\infty}\mathbb{E}\left[\sup_{0\leq t\leq T}\left\lvert \bar{M}^{(N)}(t)-\phi^{*}(t)\right\rvert^2\right]=0.
\end{equation*}

\subsection{Proof of Corollary~\ref{cor2}}
If we show that:
\begin{equation}\label{eq:proofcor2}
\rho_N\circ\varepsilon_N^{-1}\rightarrow\delta_\mu,
\end{equation}
then based on Theorem~\ref{theo:sznitman} the sequence must be $\mu$-chaotic. Observe that according to Markov's inequality for any $c>0$ and every $A\subseteq\mathcal{S}$:
\begin{equation}\label{eq:markov}
\mathbb{P}_N\left\{\left\rvert\varepsilon_N(A)-\mu(A)\right\rvert\geq c\right\}\leq \frac{\mathbb{E}_N\left[\lvert\varepsilon_N(A)-\mu(A)\rvert \right]}{c}
\end{equation}
where $\mathbb{E}_N$ and $\mathbb{P}_N$ denote values calculated according to measure $\rho_N$. By equation (\ref{eq:normal}) (definition of $M^{(N)}(t)$) and definition of $\mu$:
\begin{flalign*}
\lim_{N\rightarrow\infty}\mathbb{E}_N\left[\lvert\varepsilon_N(A)-\mu(A)\rvert\right]&=\lim_{N\rightarrow\infty}\mathbb{E}\left[
\left\lvert\frac{1}{N}\sum_{i=1}^{N}\delta_{Y^{(N)}_i(t)}(A)-\mu(A)\right\rvert\right]\\
&=
\lim_{N\rightarrow\infty}\mathbb{E}\left[
\left\lvert\sum_{j=1}^{I}M^{(N)}_j(t) \mathds{1}(j\in A)-\sum_{j=1}^{I}\phi^{*}_j(t) \mathds{1}(j\in A)\right\rvert\right]\\
&\leq
\lim_{N\rightarrow\infty}\mathbb{E}\left[
\sum_{j=1}^{I}\left\lvert M^{(N)}_j(t)-\phi^{*}_j(t)\right\rvert \mathds{1}(j\in A)\right].
\end{flalign*}
According to Theorem~\ref{theo1} and Corollary~\ref{cor1}, for all $t<\infty$, $\left\{\bar{M}^{(N)}(t)\right\}$ converges to $\phi^{*}(t)$ in both mean and mean square, therefore for every $A\subseteq\mathcal{S}$:
\[\lim_{N\rightarrow\infty}\mathbb{E}\left[
\sum_{j=1}^{I}\left\lvert M^{(N)}_j(t)-\phi^{*}_j(t)\right\rvert \mathds{1}(j\in A)\right]=0.\]
Based on (\ref{eq:markov}) for any $c>0$ and every $A\subseteq\mathcal{S}$:
\[\lim_{N\rightarrow\infty}\mathbb{P}_N\left\{\left\lvert\varepsilon_N(A)-\mu(A)\right\rvert\geq c\right\}=0,\]
which implies that for $t<\infty$, $\varepsilon_N$ converges in probability to constant random element $\mu$. Since convergence in probability implies convergence in distribution, we have shown that the measure $\rho_N\circ\varepsilon_N^{-1}$ converges to $\delta_\mu$.

\subsection{Proof of Theorem~\ref{theo:drifts}}
In order to make the proof shorter and simpler to follow, we first make an (optional) assumption, which follows. A proof applying to the general mean drift defined in Section~\ref{sec:meandrift} would use the same ideas.

\vspace{2mm}

{\bf Simplicity}: For each $s,s'\in\mathcal{S}$, there is one $j\in\mathcal{S}$ for which $Q^{(N)}_{s,s'}(\vec{m})=Q^{(N)}_{s,s'}(\vec{m}_j\,\vec{e}_j)$, i.e., the value $Q^{(N)}_{s,s'}$ is only determined by entry $j$ of an occupancy measure $\vec{m}$.

\vspace{2mm}

Suppose that $Q_{s,s'}^{(N)}$ satisfies {\bf simplicity}, fixing $j$. The {\it Poisson mean} of the intensity $F_{s,s'}^{(N)}$ is the function $\tilde{F}_{s,s'}^{(N)}:\Delta\rightarrow \mathbb{R}$, where:
\begin{equation}\label{eq:simplepoaverage}
\tilde{F}_{s,s'}^{(N)}(\vec{m})=
\vec{m}_s\sum_{k=0}^{N}Q_{s,s'}^{(N)}\left(\frac{k}{N} \vec{e}_{j}\right) f_\textit{Poisson}(k;N\vec{m}_{j})
\end{equation}

We first prove that for all $\vec{m}\in\Delta$ for which $F_{s,s'}^{(N)}(\vec{m})$ is defined:
\[\lim_{N\rightarrow\infty}\tilde{F}_{s,s'}^{(N)}(\vec{m})=\lim_{N\rightarrow\infty}F_{s,s'}^{(N)}(\vec{m}),\]
almost surely.

Based on the assumptions of the theorem, {\bf simplicity} holds, fixing a $j$ such that $F^{(N)}_{s,s'}(\vec{m})=F^{(N)}_{s,s'}(\vec{m_j}\,\vec{e}_j)$. For $1\leq k\leq N$ define the sequence of i.i.d. Poisson random variables $X_k$, with rate $\vec{m}_j$. Define:
\[S_N=\sum_{k=1}^{N}X_k.\]
Then $S_N$ is Poisson distributed with rate $N\vec{m}_j$:
\begin{equation}\label{eq:sn}
\mathbb{P}\left\{S_N=k\right\}=\frac{e^{-N\vec{m}_{j}}}{k!}(N\vec{m}_j)^k
\end{equation}
Following the weak law of large numbers (Theorem~\ref{theo:wlln}), for any $\varepsilon>0$ we have:
\begin{equation}\label{eq:prop1}
\lim_{N\rightarrow\infty}\mathbb{P}\bigg\{\left\lvert\frac{S_N}{N}-\vec{m}_j\bigg\rvert\geq\varepsilon\right\}=0
\end{equation}

Take an arbitrarily small value $\varepsilon>0$. For $N\rightarrow\infty$, the set $\left\{0,\frac{1}{N},\ldots,\frac{N(1+\varepsilon)}{N}\right\}$ is dense in the interval $[0,1+\varepsilon]$. Since $\vec{m}_j\in[0,1]$ and the above set is dense, there exists a largest non-empty neighbourhood $\left\{\frac{k_1}{N},\ldots,\frac{k_m}{N}\right\}\subset \left\{0,\frac{1}{N},\ldots,\frac{N(1+\varepsilon)}{N}\right\}$ for which for all $k_1\leq k_i \leq k_m$, $\left\lvert\frac{k_i}{N}-\vec{m}_j\right\rvert<\varepsilon$ holds. Given such a subset, based on (\ref{eq:prop1}):
\begin{equation*}
\lim_{N\rightarrow\infty}\mathbb{P}\left\{\frac{S_N}{N}\in\bigg\{\frac{k_1}{N},\ldots,\frac{k_m}{N}\bigg\}\right\}=1,
\end{equation*}
which then implies:
\begin{equation}\label{eq:almostsurebunch}
\lim_{N\rightarrow\infty}\sum_{k=k_1}^{k_m}\mathbb{P}\{S_N=k\}=1.
\end{equation}

Next consider the limit:
\begin{flalign*}
\lim_{N\rightarrow\infty}\tilde{F}_{s,s'}^{(N)}(\vec{m})&=\lim_{N\rightarrow\infty}\vec{m}_s\sum_{k=0}^{\infty}Q_{s,s'}^{(N)}\left(\frac{k}{N} \vec{e}_{j}\right)\frac{e^{-N\vec{m}_{j}}}{k!}(N\vec{m}_j)^k,
\end{flalign*}
since for all $\vec{m}\in\Delta$, $Q_{s,s'}^{(N)}(\vec{m})\leq 1$ (is finite), based on (\ref{eq:sn}) and (\ref{eq:almostsurebunch}) we have:
\begin{flalign*}
\lim_{N\rightarrow\infty}\tilde{F}_{s,s'}^{(N)}(\vec{m})&=\lim_{N\rightarrow\infty}\vec{m}_s\sum_{k=k_1}^{k_m}Q_{s,s'}^{(N)}\left(\frac{k}{N} \vec{e}_{j}\right)\frac{e^{-N\vec{m}_{j}}}{k!}(N\vec{m}_j)^k,~\text{a.s.}
\end{flalign*}
Moreover, since the interval $2\varepsilon$ is arbitrarily small, and according to assumption $Q_{s,s'}^{(N)}$ is Lipschitz continuous; for all $k_1\leq k_i\leq k_m$ we have $Q_{s,s'}^{(N)}(\frac{k_i}{N}\vec{e}_j)\simeq Q_{s,s'}^{(N)}(\vec{m}_j\, \vec{e}_{j})$. This, together with {\bf limit existence} implies that:
\begin{flalign*}
\lim_{N\rightarrow\infty}\tilde{F}_{s,s'}^{(N)}(\vec{m})&=\lim_{N\rightarrow\infty}\vec{m}_s Q_{s,s'}^{(N)}(\vec{m}_j\, \vec{e}_{j})\\
&=F^{*}_{s,s'}(\vec{m}),~\text{a.s.}
\end{flalign*}

Given {\bf boundedness}, the dominated convergence theorem implies:
\[\lim_{N\rightarrow\infty}\sum_{s,s'\in\mathcal{S},s\neq s'}\tilde{F}_{s,s'}^{(N)}(\vec{m})\,(\vec{e}_{s'}-\vec{e}_s)=F^{*}(\vec{m}),~\text{a.s.}\]
which given the definition of the mean drift proves the statement of the theorem.

\section{Conclusion}
In \cite{talebi2015continuous} the authors apply a version of the superposition principle called the {\it Poisson averaging} of the drift while deriving ODEs for wireless sensor networks. The method is used to cope with the ambiguous meaning of fractions which appear in arguments given to functions originally defined on discrete domains, and essentially interpolates the value of the function by interpreting occupancy measures as Poisson arrivals. In this paper we justify this practice by deriving a similar set of ODEs and showing how they relate to other concepts in the mean field theory of Markov processes. 

The result is the introduction of the mean drift, a concept that supports the analysis of bounded systems given that the mean field approximation applies. We maintain that within our formal framework the approximation theorems hold for systems with an infinite number of agents. We bridge the gap between the ODEs derived by using the mean drift and the ODEs describing the behaviour of the system in the limit by proving Theorem~\ref{theo:drifts}, which states that under a familiar set of conditions (smoothness and boundedness of the drift), the sequence of mean drifts converges to the limit of the sequence of drifts due to the law of large numbers.

We expect that for middle-sized systems, deriving ODEs using the mean drift gives far better approximations for the behaviour of the systems. As such, the current work provides a stepping stone for future efforts to apply the mean field analysis to the design and performance evaluation of distributed systems.

\vspace{4mm}
\textbf{Acknowledgments.} The research from DEWI project (www.dewi-project.eu) leading to these results has received funding from the ARTEMIS Joint Undertaking under grant agreement \textnumero~621353.

\bibliography{all}

\end{document}